\documentclass[11pt]{article}

\usepackage[a4paper,margin=1in]{geometry}
\usepackage{amsmath,amssymb,amsthm,mathtools}
\usepackage{microtype}
\usepackage{enumitem}
\usepackage[hidelinks]{hyperref}

\newtheorem{theorem}{Theorem}[section]
\newtheorem{lemma}[theorem]{Lemma}
\newtheorem{corollary}[theorem]{Corollary}
\theoremstyle{definition}
\newtheorem{definition}[theorem]{Definition}
\theoremstyle{remark}
\newtheorem{remark}[theorem]{Remark}

\newcommand{\qbinom}[3][]{%
  \genfrac{[}{]}{0pt}{}{#2}{#3}_{#1}%
}

\title{The Partition Pairing Theorems\\\ I}
\author{George E. Andrews \and Manosij Ghosh Dastidar}

\date{}

\begin{document}

\maketitle

\begin{abstract}
The aim of this paper is to introduce pairing theory for partitions. We begin with two statistics on integer partitions, the \emph{pairing index} and the \emph{pairing rank}.  The pairing index is equidistributed with the number of parts, while a joint refinement identifies its two constituents with the numbers of even and odd parts.  We further introduce the \emph{pairing width} and prove that pairing index and pairing width are jointly equidistributed with the number of parts and the largest part.  The resulting finite Gaussian generating function has a cyclotomic factorization from which Kummer's famous carry theorem for binomial coefficients follows.  We also prove a mod-$5$ congruence for the excess of unpaired parts congruent to $1$ modulo $4$ over those congruent to $3$ modulo $4$ in the partitions of $5n+4$.  A signed specialization exhibits that the parity of the pairing rank is governed by self-conjugate partitions.  Motivated by this, we go on to introduce a second, diagrammatic pairing: after the two wings of the Durfee square are folded together, the unpaired cells break into connected \emph{diagonal blocks}.  These blocks may be reflected independently, giving a Boolean decomposition of the set of partitions with a unique representative having all successive ranks nonnegative.  
We then relate our theory to overpartitions and Frobenius representations, obtaining as a corollary a geometric realization of overpartitions in terms of partitions whose principal hooks are all even.  Finally, we study simply paired partitions of negative pairing rank, obtaining identities involving odd divisors and overpartitions, a parity theorem for pairing rank $-2$, and a Toeplitz determinant whose coefficientwise limit is an explicit infinite product related to MacMahon's product for plane partitions.
\end{abstract}

\noindent\textbf{Keywords:}
Integer partitions, pairing index, pairing rank, partition statistics,
$q$-series, self-conjugate partitions, overpartitions, successive ranks.

\medskip
\noindent\textbf{2020 Mathematics Subject Classification:}
Primary 05A17; Secondary 11P81, 05A30, 11P83.

\section{Introduction}

Let $\lambda$ be an integer partition. Pair equal parts of $\lambda$ as far as possible.  Thus, for each part occurring with odd multiplicity, exactly one copy remains unpaired, while every part of even multiplicity is completely paired.

Let $r(\lambda)$ be the largest part that participates in at least one pair; if $\lambda$ has distinct parts, set $r(\lambda)=0$.  Write the unpaired parts in decreasing order as
\[
 u_1>u_2>\cdots>u_s,
\]
and define
\[
 s(\lambda)=u_1-u_2+u_3-u_4+\cdots.
\]
The \emph{pairing index} of $\lambda$ is
\[
 T(\lambda)=r(\lambda)+s(\lambda).
\]

The statistic $T(\lambda)$ combines two rather different pieces of information: the largest repeated part and an alternating sum formed from the parts of odd multiplicity. Our first theorem shows that the pairing index is equidistributed with the length $\ell(\lambda)$, the number of parts of $\lambda$.

Equivalently, the theorem asserts the bivariate generating-function identity
\[
 \sum_{\lambda} z^{T(\lambda)}q^{|\lambda|}
 =\sum_{\lambda} z^{\ell(\lambda)}q^{|\lambda|}
 =\frac{1}{(zq;q)_\infty}.
\]
A concise generating-function proof of this identity is given below. 

Keeping the two constituents of $T(\lambda)$ separate yields a stronger joint theorem: the pair
\[
 \bigl(r(\lambda),s(\lambda)\bigr)
\]
is equidistributed with the pair consisting of the numbers of even and odd parts.  This joint identity produces a natural bivariate generating function that recurs throughout the paper.

We next introduce a second statistic, the \emph{pairing width} $W(\lambda)$, obtained by comparing the number of parts belonging to pairs with a complementary span determined by the unpaired parts.  The pair
\[
 \bigl(T(\lambda),W(\lambda)\bigr)
\]
is equidistributed with
\[
 \bigl(\ell(\mu),\mu_1\bigr).
\]
More precisely, if $N_{a,b}(n)$ counts partitions of $n$ with pairing index $a$ and pairing width $b$, then
\[
 \sum_{n\geq0}N_{a,b}(n)q^n
 =q^{a+b-1}\qbinom[q]{a+b-2}{a-1}.
\]
The cyclotomic factorization of this Gaussian polynomial gives a pairing-theoretic form of Kummer's theorem: for every prime $p$, the $p$-adic valuation of the total number, over all sizes, of partitions with fixed pairing index $a$ and width $b$ is the number of carries when $a-1$ and $b-1$ are added in base $p$.

The unpaired parts themselves also exhibit an arithmetic cancellation.  If $c_1(\lambda)$ and $c_3(\lambda)$ denote the numbers of unpaired parts of $\lambda$ congruent to $1$ and $3$ modulo $4$, respectively, then we prove
\[
 \sum_{\lambda\vdash 5n+4}
 \bigl(c_1(\lambda)-c_3(\lambda)\bigr)\equiv0\pmod5.
\]

Define the pairing rank by
\[
 \rho(\lambda)=r(\lambda)-s(\lambda).
\]
If $E(n)$ and $O(n)$ count the partitions of $n$ having even and odd pairing rank, respectively, then their difference is $(-1)^n$ times the number of self-conjugate partitions of $n$.

The appearance of self-conjugate partitions suggests that the original pairing operation has a geometric counterpart.  Our construction is naturally expressed in terms of successive ranks, introduced by Atkin~\cite{Atkin1966} and subsequently used in the partition-sieve work of Andrews and Bressoud~\cite{Andrews1972,Bressoud1980}; for a later study of successive-rank parity blocks, see Seo and Yee~\cite{SeoYee2018}.  We introduce a second pairing directly on the Young diagram.  Fold the two wings of the Durfee square across the main diagonal and pair the cells that coincide.  The cells left unpaired occur in connected \emph{diagonal blocks}.  These blocks can be reflected independently across the diagonal.  Consequently, the partitions of a fixed integer split into Boolean classes, each having a unique representative whose successive ranks are all nonnegative.  If $\kappa(\lambda)$ is the number of diagonal blocks, then for every $m\geq0$ we prove
\[
 \sum_{\substack{\lambda\vdash n\\ \text{all successive ranks}\geq0}}
 \binom{\kappa(\lambda)}{m}
 =p(n-2m^2)-p\bigl(n-2(m+1)^2\bigr).
\]
The case $\kappa(\lambda)=0$ consists exactly of the self-conjugate partitions, so the new diagrammatic pairing is tied directly to the pairing-rank parity theorem rather than being a separate construction.

Dividing the unsigned and signed specializations of the joint distribution yields precisely the bivariate generating function for overpartitions, in the standard sense of Corteel and Lovejoy~\cite{CorteelLovejoy2004}.  We show that this quotient also has a natural Young-diagram realization: overpartitions of $m$ are equinumerous with partitions of $2m$ for which every principal hook has even length.  This equinumerosity is refined by Durfee size, and combining it with the pairing-rank parity theorem gives a convolution linking ordinary partitions, self-conjugate partitions, and even principal hooks.

We next consider \emph{simply paired partitions}, in which no part occurs more than three times. This leads to two further identities for simply paired partitions of negative pairing rank: a signed enumeration by the number of paired parts yields the number of odd divisors, while the unsigned enumeration is half the number of overpartitions into odd parts.

Finally, let $A_2(n)$ denote the number of simply paired partitions of $n$ having pairing rank $-2$.  We prove a parity law for this function.  If $\chi(m)$ is $1$ when the odd part of $m$ is a square and is $0$ otherwise, then
\[
 A_2(n+8)+A_2(n)
 \equiv n+1+\chi(n+3)+\chi(n+7)\pmod 2.
\]
In particular, when $n$ is even, $A_2(n+8)$ and $A_2(n)$ have the same parity precisely when one of $n+3$ and $n+7$ is a perfect square.  This also produces four fixed parity progressions modulo $16$.

The full family of pairing-rank generating functions has an additional
determinantal structure.  If
\[
 R_m(q)=\sum_{\substack{\lambda\\\rho(\lambda)=m}}q^{|\lambda|},
\]
then we prove an exact finite Schur-function expansion for the Toeplitz
determinant formed from the functions $R_m(q)$.  As a consequence,
\[
 \lim_{k\to\infty}
 \det\bigl(R_{i-j}(q)\bigr)_{1\leq i,j\leq k}
 =\prod_{r\geq1}\frac{1}{(1-q^{2r+1})^r},
\]
where the limit is coefficientwise as a formal power series in $q$.  The product is reminiscent of MacMahon's classical generating function for plane partitions~\cite{MacMahon1916}; we make the comparison precise at the end of Section~8.

\section{The partition pairing theorem}

\begin{theorem}[First Partition Pairing Theorem]\label{thm:pairing}
For every $n\ge 0$ and $k\ge 0$, the number of partitions $\lambda$ of $n$ with pairing index $k$ 
equals the number of partitions of $n$ into exactly $k$ parts.
\end{theorem}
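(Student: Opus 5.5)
The plan is to prove the bivariate identity
\[
\sum_{\lambda} z^{T(\lambda)}q^{|\lambda|}=\frac{1}{(zq;q)_\infty}
\]
by splitting $\lambda$ into its paired and unpaired pieces. I would then show that each piece contributes one of the two factors of
\[
\frac{1}{(zq;q)_\infty}=\frac{1}{(zq^2;q^2)_\infty}\cdot\frac{1}{(zq;q^2)_\infty}.
\]

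\emph{Step 1 (decomposition).} Every partition $\lambda$ corresponds bijectively to a pair $(\mu,D)$. Here $\mu$ is the partition formed by the values of the pairs, one entry of $\mu$ per pair. The set $D=\{u_1>\dots>u_s\}$ consists of the unpaired parts, so it is a partition into distinct parts. Conversely, $\lambda$ is recovered as $\mu\cup\mu\cup D$. This gives $|\lambda|=2|\mu|+|D|$, $r(\lambda)=\mu_1$ (with $\mu_1=0$ when $\mu=\emptyset$), and $s(\lambda)=u_1-u_2+u_3-\cdots$. Since $T=r+s$ and the two pieces are independent,
\[
\sum_\lambda z^{T(\lambda)}q^{|\lambda|}=\Bigl(\sum_\mu z^{\mu_1}q^{2|\mu|}\Bigr)\Bigl(\sum_{D\ \text{distinct}} z^{s(D)}q^{|D|}\Bigr).
\]

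\emph{Step 2 (paired factor).} Conjugation exchanges the largest part with the number of parts. The first factor is therefore $\sum_\mu z^{\ell(\mu)}q^{2|\mu|}=1/(zq^2;q^2)_\infty$.

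\emph{Step 3 (unpaired factor; main obstacle).} It remains to show
\[
\sum_{D\ \text{distinct}} z^{u_1-u_2+u_3-\cdots}q^{|D|}=\frac{1}{(zq;q^2)_\infty}.
\]
This is the classical fact that the alternating sum on distinct partitions is equidistributed with the number of parts among partitions into odd parts. I expect this to be the only real work.

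My first approach is to conjugate $D$. The conjugate of a distinct partition with $s$ parts is a partition in which each of $1,2,\dots,s$ occurs at least once, and $j$ occurs with multiplicity $u_j-u_{j+1}$ (taking $u_{s+1}=0$). Hence $s(D)$ equals the number of odd parts of the conjugate. This gives
\[
\sum_{D} z^{s(D)}q^{|D|}=\sum_{s\ge0}\frac{z^{\lceil s/2\rceil}q^{s(s+1)/2}}{(zq;q^2)_{\lceil s/2\rceil}\,(q^2;q^2)_{\lfloor s/2\rfloor}}.
\]
I would evaluate this sum by a first-column argument instead of summing directly. Deleting the staircase $1,2,\dots,s$ leaves an arbitrary partition into parts $\le s$, with odd parts marked by $z$. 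Grouping the contributions by the largest odd part then telescopes to $\prod_{j\ge0}(1-zq^{2j+1})^{-1}$.

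If that bookkeeping gets messy, the fallback is a direct bijection. Map the distinct partition to the pairs $(u_{2i-1}-u_{2i},\,u_{2i})$. The differences $u_{2i-1}-u_{2i}\ge1$ supply the $z$-weighted contributions, and together with the even-indexed parts they are then rearranged, Sylvester/Glaisher style, into a partition into odd parts with exactly $s(D)$ parts. In either version I would confirm the identity against small coefficients before relying on it.

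\emph{Step 4 (conclusion).} Multiplying the two factors gives $1/(zq;q)_\infty$. This is the generating function for partitions counted by number of parts, and comparing coefficients of $z^kq^n$ yields the theorem.

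The same factorization, with a separate variable $y$ marking $s$ and $z$ marking $r$, will also give the joint refinement with even and odd parts announced in the introduction. That suggests keeping the two variables separate throughout the computation.
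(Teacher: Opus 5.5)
Your overall strategy is sound, and it differs from the paper's proof of this theorem. The paper never splits off the unpaired parts here. It proves the finite generating function $P_N(y,z,q)$ of Lemma~\ref{lem:finite} by a recurrence on the multiplicity of the largest part $N$, using the Gaussian-binomial recurrence and a telescoping sum in $m$, and then lets $N\to\infty$. Your Steps 1, 2 and 4 are correct. The map $\lambda\mapsto(\mu,D)$ is a bijection onto arbitrary pairs, with $r(\lambda)=\mu_1$ and $s(\lambda)=s(D)$, and conjugation gives $1/(zq^2;q^2)_\infty$. This is exactly the decomposition the paper uses later for Theorem~\ref{thm:index-width}, where Lemma~\ref{lem:residual-span} (with $h\to\infty$) supplies the unpaired factor. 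Your route makes the factorization, and hence Theorem~\ref{thm:joint-pairing}, immediate. The paper's recurrence, by contrast, needs no separate input about distinct partitions and yields the bounded-part identity along the way.

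The gap is Step 3. It carries all the content, and you do not actually establish it. Your conjugate formula is correct, but the claimed telescoping does not happen. Grouping by the largest odd part of the conjugate combines $s=2k-1$ with $s=2k$, and these two terms add to
\[
 \frac{z^kq^{2k^2-k}}{(zq;q^2)_k\,(q^2;q^2)_k}.
\]
This is not the telescoping increment
\[
 \frac{1}{(zq;q^2)_k}-\frac{1}{(zq;q^2)_{k-1}}=\frac{zq^{2k-1}}{(zq;q^2)_k}.
\]
Already at $k=1$ you get $zq/((1-zq)(1-q^2))$ instead of $zq/(1-zq)$. The missing ingredient is the Durfee-square identity
\[
 \sum_{k\ge0}\frac{a^kQ^{k^2}}{(Q;Q)_k\,(aQ;Q)_k}=\frac{1}{(aQ;Q)_\infty},
\]
applied with $Q=q^2$ and $a=z/q$. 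Combinatorially, this is a Durfee square in the $2$-modular diagram of a partition into odd parts.

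Alternatively, expand $1/(zq;q^2)_\infty=\sum_t z^tq^t/(q^2;q^2)_t$. Then show that distinct partitions with alternating sum $t$ have generating function $q^t/(q^2;q^2)_t$, as Lemma~\ref{lem:residual-span} does via gap coordinates. Your ``Sylvester/Glaisher style'' fallback is too unspecific to check. Citing Bessenrodt's theorem would close the gap. However, the paper derives that theorem as a consequence (Remark~\ref{rem:joint-classical}), so citing it inverts the paper's logic.
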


We first establish the finite generating function used in the proof.  We employ the standard notation
\[
 (a;q)_N=\prod_{j=0}^{N-1}(1-aq^j),
 \qquad
 (a;q)_\infty=\prod_{j\ge0}(1-aq^j),
\]
and
\[
 \qbinom[q]{A}{B}
 =
 \begin{cases}
 \dfrac{(q;q)_A}{(q;q)_B(q;q)_{A-B}},&0\le B\le A,\\[6pt]
 0,&\text{otherwise}.
 \end{cases}
\]
For standard partition and $q$-series notation, as well as the $q$-binomial theorem and the Jacobi triple product, we refer to~\cite[Chapters~1--2]{AndrewsPartitions}.

For $N\ge0$, we let $P_N(y,z,q)$ be the generating function for partitions whose parts are at most $N$, where $y$ marks $r(\lambda)$ and $z$ marks $s(\lambda)$:
\[
 P_N(y,z,q)
 =\sum_{\substack{\lambda\\\lambda_1\le N}}
 y^{r(\lambda)}z^{s(\lambda)}q^{|\lambda|}.
\]

\begin{lemma}\label{lem:finite}
For every $N\ge0$,
\begin{equation}\label{eq:finite-formula}
 P_N(y,z,q)
 =\sum_{m=0}^{N}\sum_{t=0}^{N}
 \frac{y^m z^t q^{2m+t}}{(q^2;q^2)_m}
 \qbinom[q^2]{N}{t}.
\end{equation}
\end{lemma}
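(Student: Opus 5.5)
The plan is to split each partition with parts $\le N$ into its paired and unpaired components and show that both generating functions factor. Since the two components are independent, this should give the double sum.

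\textbf{Decomposition.} Pairing equal parts as far as possible writes $\lambda$ uniquely as a multiset of pairs $\{j,j\}$ together with a set $U$ of distinct unpaired parts. The pairs form a partition $\mu$ with parts $\le N$, contributing $q^{2|\mu|}$. The set $U\subseteq\{1,\dots,N\}$ contributes $q^{\sum U}$. Conversely, any such pair $(\mu,U)$ reassembles into a unique $\lambda$ with $\lambda_1\le N$. By definition $r(\lambda)=\mu_1$ (with $\mu_1=0$ when $\mu$ is empty), and $s(\lambda)$ depends only on $U$. Hence
\[
P_N(y,z,q)=\Bigl(\sum_{\mu_1\le N} y^{\mu_1}q^{2|\mu|}\Bigr)\,D_N(z,q),
\qquad
D_N(z,q)=\sum_{U\subseteq\{1,\dots,N\}} z^{s(U)}q^{\sum U}.
\]

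\textbf{The paired factor.} Partitions with largest part exactly $m$ are generated by $q^m/(q;q)_m$. Replacing $q$ by $q^2$ gives
\[
\sum_{\mu_1\le N}y^{\mu_1}q^{2|\mu|}=\sum_{m=0}^N \frac{y^mq^{2m}}{(q^2;q^2)_m}.
\]
This is exactly the $m$-sum in \eqref{eq:finite-formula}.

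\textbf{The unpaired factor (main step).} It remains to show
\[
D_N(z,q)=F_N(z,q):=\sum_{t=0}^N z^tq^t\qbinom[q^2]{N}{t}.
\]
I would argue by induction on $N$, conditioning on whether $N\in U$.
\begin{itemize}[label={}]
\item If $N\notin U$, the contribution is $D_{N-1}(z,q)$.
\item If $N\in U$, then $u_1=N$ and $s(U)=N-s(U\setminus\{N\})$, since the alternating signs flip after removing $u_1$. This case therefore contributes $z^Nq^ND_{N-1}(z^{-1},q)$.
\end{itemize}
So the recursion is
\[
D_N(z,q)=D_{N-1}(z,q)+z^Nq^ND_{N-1}(z^{-1},q),\qquad D_0=1.
\]

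It suffices to check that $F_N$ satisfies the same recursion with $F_0=1$. Substituting $t=N-u$, the term $z^Nq^NF_{N-1}(z^{-1},q)$ becomes
\[
\sum_u z^u q^{u}\,q^{2(N-u)}\qbinom[q^2]{N-1}{N-u}.
\]
Using the symmetry $\qbinom[q^2]{N-1}{N-u}=\qbinom[q^2]{N-1}{u-1}$, comparing coefficients of $z^uq^u$ reduces the recursion to
\[
\qbinom[q^2]{N}{u}=\qbinom[q^2]{N-1}{u}+q^{2(N-u)}\qbinom[q^2]{N-1}{u-1}.
\]
This is one of the two standard $q$-Pascal identities (with base $q^2$), so $D_N=F_N$.

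Multiplying the two factors then gives \eqref{eq:finite-formula}. The only delicate point is the sign-flip bookkeeping for $s$ when $u_1=N$ is removed, together with matching the resulting power $q^{2(N-u)}$ to the correct $q$-Pascal rule. As a sanity check, the case $N=2$ gives $1+zq+zq^3+z^2q^2$ directly from the four subsets, which agrees with $F_2$.
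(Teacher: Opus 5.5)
Your proof is correct, but it is organized differently from the paper's.

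The paper never separates the paired and unpaired parts. It derives a single four-term recurrence \eqref{eq:recurrence} for $P_N$, according to whether $N$ is absent, occurs once, occurs a positive even number of times, or occurs an odd number $\ge 3$ of times. It then checks that the right-hand side of \eqref{eq:finite-formula} satisfies this recurrence:
\begin{itemize}
\item The first two terms are handled by exactly the $q$-Pascal identity you use, and they produce the $m<N$ terms.
\item The last two terms, taken at $y=1$, need the telescoping identity $\sum_{m=0}^{N-1}q^{2m}/(q^2;q^2)_m=1/(q^2;q^2)_{N-1}$ to produce the $m=N$ terms.
\end{itemize}

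You instead factor first, using the bijection $\lambda\leftrightarrow(\mu,U)$ with $r(\lambda)=\mu_1$ and $s(\lambda)=s(U)$. The $y$-series is then the classical largest-part generating function in base $q^2$; the paper's telescoping sum is just this fact in disguise. Only the unpaired factor needs an induction, and that induction is the ``absent/once'' half of the paper's recurrence restricted to sets.

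What your route buys:
\begin{itemize}
\item It explains why the double sum splits as a product of a $y$-series and a $z$-series.
\item It isolates the finite distinct-part identity $D_N(z,q)=\sum_{t}z^tq^t\qbinom[q^2]{N}{t}$. The paper only extracts this later, as the $m=0$ part of the limiting form in \eqref{eq:distinct-alt-sum}.
\item The paper reuses your paired/unpaired decomposition anyway in the proof of Theorem~\ref{thm:index-width}.
\end{itemize}

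What the paper's route buys is a single self-contained induction on the full generating function that never needs to state the bijection explicitly.
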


\begin{proof}
We begin with a recurrence for $P_N$.  According to the multiplicity of the part $N$, there are four possibilities: $N$ does not occur; it occurs exactly once; it occurs a positive even number of times; or it occurs an odd number of times greater than one.  These four cases give
\begin{align}
 P_N(y,z,q)
 ={}&P_{N-1}(y,z,q)
 +z^Nq^N P_{N-1}(y,z^{-1},q) \notag\\
 &+\frac{q^{2N}}{1-q^{2N}}\,y^N P_{N-1}(1,z,q)
 +\frac{q^{3N}}{1-q^{2N}}\,y^Nz^N P_{N-1}(1,z^{-1},q).
 \label{eq:recurrence}
\end{align}
If $N$ occurs once, it becomes the largest unpaired part, so the alternating sum of the previous unpaired parts changes from $s$ to $N-s$; this accounts for the substitution $z\mapsto z^{-1}$ and the factor $z^N$.  If $N$ occurs at least twice, then $r(\lambda)=N$, so the previous value of $r$ is discarded; this explains the substitution $y=1$ in the last two terms.

We let $p_N(y,z,q)$ denote the right-hand side of \eqref{eq:finite-formula}.  Since
\[
 P_0(y,z,q)=p_0(y,z,q)=1,
\]
it suffices to show that $p_N$ satisfies \eqref{eq:recurrence}.

First,
\begin{align}
 &p_{N-1}(y,z,q)+z^Nq^Np_{N-1}(y,z^{-1},q)\notag\\
 &\quad=
 \sum_{m=0}^{N-1}\sum_{t=0}^{N}
 \frac{y^mz^tq^{2m+t}}{(q^2;q^2)_m}
 \left(
 \qbinom[q^2]{N-1}{t}
 +q^{2N-2t}\qbinom[q^2]{N-1}{t-1}
 \right)\notag\\
 &\quad=
 \sum_{m=0}^{N-1}\sum_{t=0}^{N}
 \frac{y^mz^tq^{2m+t}}{(q^2;q^2)_m}
 \qbinom[q^2]{N}{t},
 \label{eq:first-pair}
\end{align}
where the last step is the usual Gaussian-binomial recurrence
\[
 \qbinom[Q]{N}{t}
 =\qbinom[Q]{N-1}{t}
 +Q^{N-t}\qbinom[Q]{N-1}{t-1}.
\]
Thus \eqref{eq:first-pair} supplies every term in \eqref{eq:finite-formula} with $m<N$.

It remains to recover the terms with $m=N$.  The last two terms of \eqref{eq:recurrence} give
\begin{align*}
 &\frac{q^{2N}}{1-q^{2N}}\,y^Np_{N-1}(1,z,q)
 +\frac{q^{3N}}{1-q^{2N}}\,y^Nz^Np_{N-1}(1,z^{-1},q)\\
 &\quad=
 \frac{q^{2N}y^N}{1-q^{2N}}
 \sum_{t=0}^{N}z^tq^t\qbinom[q^2]{N}{t}
 \sum_{m=0}^{N-1}\frac{q^{2m}}{(q^2;q^2)_m}.
\end{align*}
The final sum telescopes, since for $m\ge1$,
\[
 \frac{q^{2m}}{(q^2;q^2)_m}
 =\frac{1}{(q^2;q^2)_m}-\frac{1}{(q^2;q^2)_{m-1}},
\]
and hence
\[
 \sum_{m=0}^{N-1}\frac{q^{2m}}{(q^2;q^2)_m}
 =\frac{1}{(q^2;q^2)_{N-1}}.
\]
Therefore
\begin{align*}
 &\frac{q^{2N}}{1-q^{2N}}\,y^Np_{N-1}(1,z,q)
 +\frac{q^{3N}}{1-q^{2N}}\,y^Nz^Np_{N-1}(1,z^{-1},q)\\
 &\qquad=
 \frac{y^Nq^{2N}}{(q^2;q^2)_N}
 \sum_{t=0}^{N}z^tq^t\qbinom[q^2]{N}{t},
\end{align*}
which is exactly the $m=N$ portion of \eqref{eq:finite-formula}.  Together with \eqref{eq:first-pair}, this proves the lemma.
\end{proof}

\begin{proof}[Proof of Theorem~\ref{thm:pairing}]
We set $y=z$ in Lemma~\ref{lem:finite} and let $N\to\infty$.  Since
\[
 \lim_{N\to\infty}\qbinom[q^2]{N}{t}
 =\frac{1}{(q^2;q^2)_t},
\]
we obtain
\begin{align*}
 \sum_{\lambda}z^{T(\lambda)}q^{|\lambda|}
 &=\lim_{N\to\infty}P_N(z,z,q)\\
 &=\sum_{m\ge0}\sum_{t\ge0}
 \frac{z^{m+t}q^{2m+t}}
 {(q^2;q^2)_m(q^2;q^2)_t}\\
 &=\frac{1}{(zq^2;q^2)_\infty}\,
   \frac{1}{(zq;q^2)_\infty}\\
 &=\frac{1}{(zq;q)_\infty}.
\end{align*}
But
\[
 \frac{1}{(zq;q)_\infty}
 =\prod_{j\ge1}\frac{1}{1-zq^j}
 =\sum_{\lambda}z^{\ell(\lambda)}q^{|\lambda|},
\]
the ordinary generating function for partitions in which the exponent of $z$ records the number of parts.  Comparing coefficients of $z^kq^n$ proves the theorem.
\end{proof}

The proof above uses only the specialization $y=z$.  Keeping the two
variables separate reveals that the two constituents of the pairing index
already have independent classical interpretations.  For a partition $\mu$,
we let $e(\mu)$ and $o(\mu)$ denote, respectively, the numbers of even and odd
parts of $\mu$, counted with multiplicity.

\begin{theorem}[Joint Pairing Theorem]\label{thm:joint-pairing}
For every $n,a,b\geq0$, the number of partitions $\lambda$ of $n$ satisfying
\[
 r(\lambda)=a,\qquad s(\lambda)=b
\]
equals the number of partitions with $a$ even parts and $b$ odd parts. 
\end{theorem}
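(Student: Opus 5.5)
We take the limit $N\to\infty$ in Lemma~\ref{lem:finite}, this time keeping $y$ and $z$ separate, and then compare with the product generating function that refines partitions by the numbers of even and odd parts. This is the same computation as in the proof of Theorem~\ref{thm:pairing}, only without the specialization $y=z$.

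First we justify the limit. For fixed $n$, the coefficient of $q^n$ in $P_N(y,z,q)$ stabilizes once $N\ge n$, because every partition of $n$ has largest part at most $n$. Hence
\[
 \sum_{\lambda} y^{r(\lambda)}z^{s(\lambda)}q^{|\lambda|}=\lim_{N\to\infty}P_N(y,z,q)
\]
coefficientwise in $q$. On the right side of \eqref{eq:finite-formula} we use
\[
 \lim_{N\to\infty}\qbinom[q^2]{N}{t}=\frac{1}{(q^2;q^2)_t}.
\]
This limit is coefficientwise, and the factor $q^{2m+t}$ guarantees that only finitely many pairs $(m,t)$ contribute to each coefficient. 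Consequently the double sum converges termwise to
\[
 \sum_{m\ge0}\frac{y^mq^{2m}}{(q^2;q^2)_m}\;\sum_{t\ge0}\frac{z^tq^t}{(q^2;q^2)_t}.
\]

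Next we factor this. By Euler's identity $\sum_{k\ge0} x^k/(Q;Q)_k = 1/(x;Q)_\infty$ (a case of the $q$-binomial theorem), applied with base $Q=q^2$ and $x=yq^2$ and $x=zq$ respectively, the expression equals
\[
 \frac{1}{(yq^2;q^2)_\infty\,(zq;q^2)_\infty}
 =\prod_{j\ge1}\frac{1}{1-yq^{2j}}\prod_{j\ge1}\frac{1}{1-zq^{2j-1}}.
\]
This product is exactly $\sum_\mu y^{e(\mu)}z^{o(\mu)}q^{|\mu|}$. Each even part $2j$ contributes the factor $yq^{2j}$ for each copy, and each odd part $2j-1$ contributes the factor $zq^{2j-1}$ for each copy.

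Comparing coefficients of $y^az^bq^n$ on both sides gives the theorem. There is no real obstacle here: the only point needing care is the interchange of the limit with the double sum, and that is handled by the $q$-adic convergence argument above.
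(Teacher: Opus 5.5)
Your proof is correct and is essentially the paper's own argument: let $N\to\infty$ in Lemma~\ref{lem:finite} without identifying $y$ and $z$, sum the two resulting series to $1/\bigl((yq^2;q^2)_\infty(zq;q^2)_\infty\bigr)$, and read this product as the generating function counting even and odd parts. Your explicit justification that the limit can be taken coefficientwise through the double sum is a detail the paper leaves implicit.
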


\begin{proof}
Letting $N\to\infty$ in Lemma~\ref{lem:finite}, without identifying $y$
and $z$, gives
\begin{align*}
 \sum_{\lambda}y^{r(\lambda)}z^{s(\lambda)}q^{|\lambda|}
 &=\sum_{m,t\geq0}
   \frac{y^mz^tq^{2m+t}}
   {(q^2;q^2)_m(q^2;q^2)_t}\\
 &=\frac{1}{(yq^2;q^2)_\infty(zq;q^2)_\infty}.
\end{align*}
On the other hand,
\[
 \frac{1}{(yq^2;q^2)_\infty(zq;q^2)_\infty}
 =\prod_{j\geq1}\frac{1}{1-yq^{2j}}
  \prod_{j\geq1}\frac{1}{1-zq^{2j-1}}.
\]
In the first product the exponent of $y$ records the number of even parts,
while in the second the exponent of $z$ records the number of odd parts.
Thus the product is
\[
 \sum_{\mu}y^{e(\mu)}z^{o(\mu)}q^{|\mu|}.
\]
Comparing coefficients of $y^az^bq^n$ proves the theorem.
\end{proof}

\begin{remark}[Classical specializations]\label{rem:joint-classical}
Theorem~\ref{thm:joint-pairing} specializes to several classical partition theorems.  Summing over $b$ gives the Andrews--Deutsch theorem: the largest repeated part is equidistributed with the number of even parts~\cite{AndrewsDeutsch2016}.  Taking $a=0$ gives Bessenrodt's refinement of Euler's theorem: distinct partitions with alternating sum $b$ are equinumerous with partitions into exactly $b$ odd parts~\cite{Bessenrodt1994}; summing over $b$ recovers Euler's odd--distinct theorem.  Finally, within the specialization $a=0$, setting $z=q$ in the generating function and then replacing $q^2$ by $q$ yields Schmidt's theorem, which states that $p(n)$ counts distinct partitions whose odd-indexed parts sum to $n$~\cite{AndrewsPaule2022}.  Thus the Joint Pairing Theorem provides a single two-variable framework for these classical identities.
\end{remark}

We refine Theorem~\ref{thm:pairing} by recording a second statistic.  The
\emph{pair span} of $\lambda$ is the number of parts of $\lambda$ that belong
to pairs.  If the unpaired parts are
\[
 u_1>u_2>\cdots>u_j
\]
and $j>0$, we define the \emph{residual span} by
\[
 2\bigl(u_1-s(\lambda)\bigr)+1;
\]
when there are no unpaired parts, we define the residual span to be $0$.  The
\emph{pairing width} $W(\lambda)$ is the larger of the pair span and the
residual span.

The following finite generating function handles the unpaired
portion.

\begin{lemma}[Residual-span generating function]\label{lem:residual-span}
For $h\geq0$,
\[
 \sum_{\substack{\delta\text{ distinct}\\
                  \delta=\varnothing\ \text{or}\ 
                  \delta_1-s(\delta)\leq h}}
 x^{s(\delta)}q^{|\delta|}
 =\frac{1}{(xq;q^2)_{h+1}}.
\]
\end{lemma}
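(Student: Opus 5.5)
The plan is to refine the sum by the exact value $j=\delta_1-s(\delta)$ and $t=s(\delta)$, and to prove
\[
 \sum_{\substack{\delta\ \text{distinct}\\ s(\delta)=t,\ \delta_1-s(\delta)\le h}} q^{|\delta|}
 \;=\; q^{t}\qbinom[q^2]{h+t}{t}\qquad(t\geq0),
\]
where the empty partition is the term $t=0$. Granting this, the lemma follows at once from the $q$-binomial theorem in the form
\[
\sum_{t\ge0}\qbinom[q^2]{h+t}{t}(xq)^t=\frac{1}{(xq;q^2)_{h+1}}.
\]
The right-hand side $q^t\qbinom[q^2]{h+t}{t}$ is the generating function for partitions into exactly $t$ odd parts, each at most $2h+1$: write such a part as $2c_i+1$, where $c$ has at most $t$ parts, each at most $h$. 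This matches the paper's normalization of the residual span, $2(\delta_1-s)+1$. So the statement is really a bounded form of Bessenrodt's refinement (Remark~\ref{rem:joint-classical}), with the largest odd part corresponding to $2(\delta_1-s)+1$.

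To prove the refined identity I would pass to the conjugate. For a distinct partition $\delta$ with $k$ parts, the conjugate $\delta'$ contains every height $1,\dots,k$ with some multiplicity $m_i\ge1$. Moreover, $\delta_{2i-1}-\delta_{2i}$ is the number of columns of height exactly $2i-1$. Hence
\[
s(\delta)=\sum_{i\ \mathrm{odd}}m_i,\qquad \delta_1-s(\delta)=\sum_{i\ \mathrm{even}}m_i,\qquad |\delta|=\sum_i i\,m_i.
\]
So the three-variable generating function $G(x,y,q)=\sum_\delta x^{s(\delta)}y^{\delta_1-s(\delta)}q^{|\delta|}$ is an explicit sum over $k$ of products of geometric series. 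I would then show
\[
G=1+\sum_{j\ge0}y^j\frac{xq^{2j+1}}{(xq;q^2)_{j+1}}.
\]
Multiplying by $1/(1-y)$ and taking the coefficient of $y^h$ turns this into the cumulative sum, which telescopes to $1/(xq;q^2)_{h+1}$ via $xq^{2j+1}/(xq;q^2)_{j+1}=1/(xq;q^2)_{j+1}-1/(xq;q^2)_j$.

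The main obstacle is verifying the closed form for $G$. I would do this with a recurrence in the spirit of Lemma~\ref{lem:finite}: condition on the height-$1$ and height-$2$ columns, that is, strip the two smallest parts' worth of columns. Removing the bottom two rows pairs up as one odd-height contribution and one even-height contribution, and shifts the remaining heights by $2$, i.e. $q\mapsto q$ with weights scaled by $q^2$ per column. This gives a functional equation relating $G(x,y,q)$ to $G(xq^2,yq^2,q)$ (or a finite analogue indexed by $k$), and I would check that the proposed right-hand side satisfies the same equation and initial condition.

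If the functional equation becomes unwieldy, my fallback is a direct bijection. Send a distinct partition $\delta$ to the odd partition whose parts are $2(\text{number of even-height columns at or above level }2i-1)+1$, one part per odd-height column. The bijectivity of this map (with the multiplicities $m_{2i}$ redistributed as the $c_i$) is the delicate point to check.
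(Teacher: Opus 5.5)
Your main route is correct, but it evaluates the refined count differently from the paper. Your column multiplicities are the paper's gaps shifted by one ($m_i=g_i+1$). From there the paper fixes $t=s(\delta)$ and writes the even- and odd-length contributions as products of Gaussian polynomials. It merges them by $q$-Pascal, sums over the length with the $q$-Vandermonde identity to reach $q^t\qbinom[q^2]{t+h}{t}$, and finishes with the $q$-binomial theorem. You instead peel off the columns of heights $1$ and $2$, treating $\ell(\delta)\le1$ separately, which gives
\[
 G(x,y,q)=\frac{1}{1-xq}+\frac{xyq^{3}}{(1-xq)(1-yq^{2})}\,G(xq^{2},yq^{2},q).
\]
The check you defer does go through. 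Use $(xq^3;q^2)_{j+1}=(xq;q^2)_{j+2}/(1-xq)$ and compare coefficients of $y^n$ for $n\ge1$; the check reduces to
\[
 \frac{1}{(xq;q^2)_{n+1}}=\frac{1}{1-xq}+\sum_{j=0}^{n-1}\frac{xq^{2j+3}}{(xq;q^2)_{j+2}},
\]
which is the same telescoping you use at the end. The solution is unique because the $G$-term carries the factor $q^3$; equivalently, induct on the number of parts.

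Your route needs neither $q$-Vandermonde nor the $q$-binomial theorem, and your refined identity becomes a corollary rather than an ingredient. It also gives the exact joint law of $(s,\delta_1-s)$: $[y^j](G-1)$ is the generating function for odd partitions with largest part exactly $2j+1$, with $x$ counting parts. The paper's route, by contrast, derives the fixed-$t$ polynomial outright. You must know the closed form for $G$ in advance, though it can be read off from the target by differencing in $h$.

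Drop the fallback bijection: as described, it does not preserve size. The image has size $s+2\sum_{i\le k}m_{2i-1}m_{2k}$, which is quadratic in the multiplicities, while $|\delta|=\sum_i i\,m_i$ is linear. For example, $\delta=(3,1)$, with $m_1=2$ and $m_2=1$, is sent to $(3,3)$. A bijective proof would need a genuinely different map, such as Sylvester's bijection.
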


\begin{proof}
We fix $t\geq1$ and write a distinct partition as
\[
 \delta=(u_1>u_2>\cdots>u_\ell>0).
\]
We introduce the nonnegative gaps
\[
 g_i=u_i-u_{i+1}-1\quad(1\leq i<\ell),
 \qquad g_\ell=u_\ell-1.
\]
Then
\begin{align}
 |\delta|&=\frac{\ell(\ell+1)}2+\sum_{i=1}^{\ell}i g_i,\label{eq:gap-weight}\\
 s(\delta)&=\left\lceil\frac\ell2\right\rceil
             +\sum_{\substack{1\leq i\leq\ell\\ i\ \mathrm{odd}}}g_i,\label{eq:gap-alt}\\
 u_1-s(\delta)&=\left\lfloor\frac\ell2\right\rfloor
             +\sum_{\substack{1\leq i\leq\ell\\ i\ \mathrm{even}}}g_i.\label{eq:gap-res}
\end{align}
We put $Q=q^2$.  For fixed $s(\delta)=t$, the contributions from lengths
$\ell=2j$ and $\ell=2j+1$ are respectively
\[
 q^tQ^{j^2}
 \qbinom[Q]{t-1}{j-1}\qbinom[Q]{h}{j}
\]
and
\[
 q^tQ^{j(j+1)}
 \qbinom[Q]{t-1}{j}\qbinom[Q]{h}{j}.
\]
These formulas follow directly from
\eqref{eq:gap-weight}--\eqref{eq:gap-res}: the odd-indexed gaps have a
prescribed total, while the even-indexed gaps have total at most the indicated
bound.  Adding the two contributions and using
\[
 \qbinom[Q]{t-1}{j-1}+Q^j\qbinom[Q]{t-1}{j}
 =\qbinom[Q]{t}{j}
\]
gives
\[
 [x^t]\sum_{\substack{\delta\text{ distinct}\\
                  \delta_1-s(\delta)\leq h}}
 x^{s(\delta)}q^{|\delta|}
 =q^t\sum_{j\geq0}Q^{j^2}
   \qbinom[Q]{t}{j}\qbinom[Q]{h}{j}.
\]
The finite $q$-Vandermonde identity yields
\[
 \sum_{j\geq0}Q^{j^2}
   \qbinom[Q]{t}{j}\qbinom[Q]{h}{j}
 =\qbinom[Q]{t+h}{t}.
\]
The empty partition supplies the case $t=0$.  Hence
\[
 \sum_{\substack{\delta\text{ distinct}\\
                  \delta=\varnothing\ \text{or}\ 
                  \delta_1-s(\delta)\leq h}}
 x^{s(\delta)}q^{|\delta|}
 =\sum_{t\geq0}(xq)^t\qbinom[q^2]{t+h}{t}.
\]
The $q$-binomial theorem now gives
\[
 \sum_{t\geq0}(xq)^t\qbinom[q^2]{t+h}{t}
 =\frac{1}{(xq;q^2)_{h+1}},
\]
as required.
\end{proof}

\begin{theorem}[Pairing index--width theorem]\label{thm:index-width}
For every $n\geq0$ and $a,b\geq0$, the number of partitions of $n$ having
pairing index $a$ and pairing width $b$ equals the number of partitions
of $n$ having exactly $a$ parts and largest part $b$.
\end{theorem}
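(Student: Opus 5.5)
The plan is to prove a cumulative version of the theorem and then take differences. For $h\ge0$ I will show that
\[
 \sum_{\substack{\lambda\\ W(\lambda)\le h}} z^{T(\lambda)}q^{|\lambda|}=\frac{1}{(zq;q)_h}.
\]
The right-hand side is the generating function for partitions with largest part at most $h$, where $z$ marks the number of parts. Subtracting the identity at $h-1$ from the one at $h$ then isolates partitions of width exactly $b=h$ on the left and largest part exactly $b$ on the right. Comparing coefficients of $z^aq^n$ gives the theorem.

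To prove the cumulative identity, I decompose $\lambda$ bijectively into two pieces. The first is a partition $\pi$ whose parts are the paired values, each pair contributing one part of $\pi$ and weight $2\pi_i$. The second is the distinct partition $\delta$ of unpaired parts. Under this decomposition:
\begin{itemize}
\item $r(\lambda)=\pi_1$ (or $0$ if $\pi=\varnothing$), $s(\lambda)=s(\delta)$, and $|\lambda|=2|\pi|+|\delta|$;
\item the pair span is $2\ell(\pi)$.
\end{itemize}
The condition $W(\lambda)\le h$ therefore splits into two independent conditions. The first is $\ell(\pi)\le\lfloor h/2\rfloor$. The second is that either $\delta=\varnothing$ or $2(\delta_1-s(\delta))+1\le h$, that is, $\delta_1-s(\delta)\le\lceil h/2\rceil-1$.

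The generating function factors accordingly.
\begin{itemize}
\item By conjugation, partitions $\pi$ with at most $k=\lfloor h/2\rfloor$ parts, with $y$ marking the largest part and $q^2$ marking size, have generating function $1/(yq^2;q^2)_k$.
\item For $h\ge1$, Lemma~\ref{lem:residual-span} applies with parameter $\lceil h/2\rceil-1\ge0$ and gives $1/(xq;q^2)_{\lceil h/2\rceil}$ for the $\delta$ factor.
\end{itemize}
Setting $y=x=z$, the product is
\[
 \frac{1}{(zq^2;q^2)_{\lfloor h/2\rfloor}\,(zq;q^2)_{\lceil h/2\rceil}}.
\]
The odd factors run over $zq,zq^3,\dots,zq^{2\lceil h/2\rceil-1}$ and the even factors over $zq^2,\dots,zq^{2\lfloor h/2\rfloor}$. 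Together these are exactly $zq^1,\dots,zq^h$, so the product equals $1/(zq;q)_h$.

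The case $h=0$ needs separate treatment, since the lemma's parameter would be $-1$ there. In that case $W(\lambda)=0$ forces both $\pi$ and $\delta$ to be empty. Hence only the empty partition contributes, in agreement with $1/(zq;q)_0=1$.

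The main point requiring care is the factorization step. One must check that the width condition really decouples, which holds because $W$ is a maximum and so $W\le h$ is a conjunction of the two conditions. One must also check that $r$ depends only on $\pi$ while $s$ and the residual span depend only on $\delta$. The remaining risk is the parity bookkeeping in converting $2(u_1-s)+1\le h$ into the bound used in Lemma~\ref{lem:residual-span}. I would verify this conversion on small values of $h$ before writing out the final difference argument.
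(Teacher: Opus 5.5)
Your proposal is correct and is essentially the paper's proof. It uses the same cumulative identity $\sum_{W(\lambda)\le h} z^{T(\lambda)}q^{|\lambda|}=1/(zq;q)_h$, the same splitting of $\lambda$ into the halved paired portion $\pi$ and the distinct unpaired portion $\delta$, conjugation for the $\pi$-factor, Lemma~\ref{lem:residual-span} with parameter $\lceil h/2\rceil-1=\lfloor (h-1)/2\rfloor$ (exactly the paper's choice) for the $\delta$-factor, and differencing in $h$. Your parity conversion is right, and your separate $h=0$ case, which also settles $b=0$ directly without subtraction, matches the paper's remark that this case is immediate.
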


\begin{proof}
For $B\geq0$, we put
\[
 F_B(x,q)=\sum_{\substack{\lambda\\W(\lambda)\leq B}}
 x^{T(\lambda)}q^{|\lambda|}.
\]
We prove
\begin{equation}\label{eq:width-bounded}
 F_B(x,q)=\frac{1}{(xq;q)_B}.
\end{equation}
The case $B=0$ is immediate.  We assume $B\geq1$.

After maximal pairing, we write $\lambda$ uniquely as a paired portion $\pi$
and a distinct unpaired portion $\delta$: for each extracted pair $j+j$,
we place one copy of $j$ in $\pi$, while $\delta$ consists of the parts left
unpaired.  Then
\[
 |\lambda|=2|\pi|+|\delta|,
 \qquad
 T(\lambda)=\pi_1+s(\delta),
\]
with $\pi_1=0$ when $\pi$ is empty.  Moreover, the pair span is
$2\ell(\pi)$.

We set
\[
 m=\left\lfloor\frac B2\right\rfloor,
 \qquad
 h=\left\lfloor\frac{B-1}{2}\right\rfloor.
\]
The condition $W(\lambda)\leq B$ is equivalent to the two independent
conditions
\[
 \ell(\pi)\leq m,
 \qquad
 \delta=\varnothing\ \text{or}\ 
 \delta_1-s(\delta)\leq h.
\]
Consequently $F_B$ factors into a paired and an unpaired contribution.
For the paired contribution, conjugating $\pi$ gives
\[
 \sum_{\ell(\pi)\leq m}x^{\pi_1}q^{2|\pi|}
 =\frac{1}{(xq^2;q^2)_m}.
\]
By Lemma~\ref{lem:residual-span}, the unpaired contribution is
\[
 \frac{1}{(xq;q^2)_{h+1}}.
\]
Hence
\[
 F_B(x,q)
 =\frac{1}{(xq^2;q^2)_{\lfloor B/2\rfloor}
            (xq;q^2)_{\lceil B/2\rceil}}
 =\frac{1}{(xq;q)_B},
\]
which proves \eqref{eq:width-bounded}.

The right-hand side is the generating function for partitions whose largest
part is at most $B$, with $x$ marking the number of parts.  Thus
\[
 \#\{\lambda\vdash n:T(\lambda)=a,\ W(\lambda)\leq B\}
 =\#\{\mu\vdash n:\ell(\mu)=a,\ \mu_1\leq B\}.
\]
Subtracting the corresponding identity with $B-1$ in place of $B$ gives the
assertion with pairing width and largest part both equal to $B$.
\end{proof}

The proof also yields the following exact finite polynomial, which we record for later use.
If
\[
 N_{a,b}(n)=\#\{\lambda\vdash n:T(\lambda)=a,\ W(\lambda)=b\},
\]
then
\begin{equation}\label{eq:width-gaussian}
 \sum_{n\geq0}N_{a,b}(n)q^n
 =q^{a+b-1}\qbinom[q]{a+b-2}{a-1}
 \qquad(a,b\geq1).
\end{equation}
Indeed,
\[
 F_b(x,q)-F_{b-1}(x,q)
 =\frac{xq^b}{(xq;q)_b},
\]
and coefficient extraction by the $q$-binomial theorem gives
\eqref{eq:width-gaussian}.

This Gaussian polynomial also yields a classical arithmetic theorem as an
immediate consequence.

\begin{corollary}[Pairing form of Kummer's theorem]\label{cor:kummer}
Let $p$ be prime and let $a,b\geq1$.  The $p$-adic valuation of the total
number, over all sizes, of partitions having pairing index $a$ and pairing
width $b$ equals the number of carries that occur when $a-1$ and $b-1$
are added in base $p$.
\end{corollary}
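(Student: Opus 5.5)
The plan is to evaluate \eqref{eq:width-gaussian} at $q=1$ and then invoke the classical Kummer theorem for binomial coefficients. The total number, over all sizes $n$, of partitions with pairing index $a$ and width $b$ is $\sum_{n}N_{a,b}(n)$. This sum is finite, because \eqref{eq:width-gaussian} shows that the generating function is a polynomial. Setting $q=1$ therefore gives
\[
 \sum_{n\geq0}N_{a,b}(n)=\binom{a+b-2}{a-1},
\]
since the Gaussian polynomial $\qbinom[q]{A}{B}$ specializes to $\binom{A}{B}$ at $q=1$. The cleanest justification is that $\qbinom[q]{A}{B}$ is the generating function for partitions fitting in a $B\times(A-B)$ box, and there are $\binom{A}{B}$ such partitions. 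Equivalently, one can use the Pascal-type recurrence already quoted in the paper together with induction on $A$.

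The second step is to show that the $p$-adic valuation of $\binom{(a-1)+(b-1)}{a-1}$ equals the number of carries when $a-1$ and $b-1$ are added in base $p$. This is exactly Kummer's theorem. The abstract promises that it follows from the cyclotomic factorization, so I would derive it that way. Write
\[
 \qbinom[q]{A}{B}=\prod_{d\geq2}\Phi_d(q)^{e_d},
\]
where
\[
 e_d=\left\lfloor\frac{A}{d}\right\rfloor-\left\lfloor\frac{B}{d}\right\rfloor-\left\lfloor\frac{A-B}{d}\right\rfloor\in\{0,1\}.
\]
This formula comes from $1-q^k=\prod_{d\mid k}\Phi_d(q)$ and from counting the multiples of $d$ up to $A$, $B$ and $A-B$. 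Evaluating at $q=1$ uses $\Phi_d(1)=p$ when $d=p^j$ with $j\geq1$, and $\Phi_d(1)=1$ when $d$ is not a prime power. Hence
\[
 v_p\binom{A}{B}=\sum_{j\geq1}\left(\left\lfloor\frac{A}{p^j}\right\rfloor-\left\lfloor\frac{B}{p^j}\right\rfloor-\left\lfloor\frac{A-B}{p^j}\right\rfloor\right).
\]
Now set $A=a+b-2$, $B=a-1$ and $A-B=b-1$. The $j$-th summand equals $1$ exactly when the sum of the last $j$ base-$p$ digits of $a-1$ and $b-1$ reaches at least $p^j$. That is precisely the condition that a carry occurs out of position $j-1$. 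Summing over $j$ therefore counts the carries.

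The only step requiring care is the identity $e_d\in\{0,1\}$ together with the carry interpretation of $\lfloor (x+y)/p^j\rfloor-\lfloor x/p^j\rfloor-\lfloor y/p^j\rfloor$. Both are standard, using $\lfloor u+v\rfloor-\lfloor u\rfloor-\lfloor v\rfloor\in\{0,1\}$ and noting that the value is $1$ exactly when $(x\bmod p^j)+(y\bmod p^j)\geq p^j$. Apart from this, everything reduces to the specialization $q=1$ of \eqref{eq:width-gaussian}, so I expect no real obstacle.
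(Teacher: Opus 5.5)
Your proposal is correct and follows essentially the same route as the paper. Both arguments specialize the Gaussian polynomial \eqref{eq:width-gaussian} at $q=1$ to obtain $\binom{a+b-2}{a-1}$, and both read off the $p$-adic valuation from the cyclotomic exponents $e_d\in\{0,1\}$ at $d=p^j$, using $\Phi_{p^j}(1)=p$ and the carry criterion $\bigl((a-1)\bmod p^j\bigr)+\bigl((b-1)\bmod p^j\bigr)\geq p^j$.
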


\begin{proof}
Setting $q=1$ in \eqref{eq:width-gaussian} gives
\begin{equation}\label{eq:width-total-binomial}
 \sum_{n\geq0}N_{a,b}(n)
 =\binom{a+b-2}{a-1}.
\end{equation}
The carry rule is already visible in the cyclotomic factorization of the Gaussian
polynomial.  We put
\[
 r=a-1,\qquad s=b-1.
\]
For $d\geq2$, the exponent of the cyclotomic polynomial $\Phi_d(q)$ in
$\qbinom[q]{r+s}{r}$ is
\begin{equation}\label{eq:cyclotomic-exponent}
 e_d=\left\lfloor\frac{r+s}{d}\right\rfloor
     -\left\lfloor\frac r d\right\rfloor
     -\left\lfloor\frac s d\right\rfloor.
\end{equation}
Writing
\[
 r=du+\rho,\qquad s=dv+\sigma,
 \qquad0\leq\rho,\sigma<d,
\]
reduces \eqref{eq:cyclotomic-exponent} to
\[
 e_d=\left\lfloor\frac{\rho+\sigma}{d}\right\rfloor.
\]
Thus $e_d$ is either $0$ or $1$, and
\[
 e_d=1
 \quad\Longleftrightarrow\quad
 (r\bmod d)+(s\bmod d)\geq d.
\]
Taking $d=p^j$ with $j\geq1$, this is exactly the condition that the addition of $r$ and
$s$ in base $p$ produces a carry into the $p^j$-place.

Finally,
\[
 \Phi_{p^j}(1)=p,
\]
whereas $\Phi_d(1)=1$ when $d>1$ is not a prime power.  Therefore the
$p$-adic valuation of the specialization
\[
 \qbinom[q]{r+s}{r}\Big|_{q=1}=\binom{r+s}{r}
\]
is the number of such carries.  Together with
\eqref{eq:width-total-binomial}, this gives the stated pairing formulation and
recovers Kummer's classical theorem~\cite{Kummer1852}.
\end{proof}

For a partition $\lambda$, we let $c_1(\lambda)$ and $c_3(\lambda)$ denote,
respectively, the numbers of unpaired parts congruent to $1$ and $3$ modulo
$4$, and we put
\[
 \delta(\lambda)=c_1(\lambda)-c_3(\lambda).
\]

\begin{theorem}\label{thm:unpaired-mod5}

Among all the unpaired parts in the partitions of $5n+4$, the
number of parts congruent to $1$ modulo $4$ minus the number congruent to $3$
modulo $4$ is divisible by $5$.
\end{theorem}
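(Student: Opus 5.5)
The plan is to compute the generating function of $\sum_{\lambda\vdash n}\delta(\lambda)$ in closed form, rewrite it through a theta function, and then extract coefficients of $q^{5n+4}$ modulo $5$. Modulo $5$, one piece reduces to Ramanujan's congruence and the other to a Jacobi-triple-product argument.

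\textbf{Step 1: the generating function.} A part $j$ is unpaired in $\lambda$ exactly when its multiplicity is odd. Replacing the factor $1/(1-q^j)$ by $q^j/(1-q^{2j})$ shows that the partitions in which $j$ is unpaired have generating function
\[
 \frac{1-q^j}{(q;q)_\infty}\cdot\frac{q^j}{1-q^{2j}}=\frac{1}{(q;q)_\infty}\,\frac{q^j}{1+q^j}.
\]
Summing over $j$ with the character $\chi_4(j)$ (equal to $\pm1$ for $j\equiv\pm1 \pmod 4$, and $0$ for even $j$) gives
\[
 \sum_{\lambda}\delta(\lambda)q^{|\lambda|}=\frac{1}{(q;q)_\infty}\sum_{j\ge1}\chi_4(j)\frac{q^j}{1+q^j}.
\]

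\textbf{Step 2: identify the Lambert series.} Expanding $q^j/(1+q^j)=\sum_{k\ge1}(-1)^{k-1}q^{jk}$ and noting that $(-1)^{jk}=(-1)^k$ for odd $j$, the series equals $-\sum_{j,k}\chi_4(j)(-q)^{jk}$. By Jacobi's two-squares theorem,
\[
 \sum_{j,k}\chi_4(j)x^{jk}=\frac{\varphi(x)^2-1}{4},
 \qquad \varphi(x)=\sum_{n\in\mathbb Z}x^{n^2}.
\]
Hence the Lambert series is $\bigl(1-\varphi(-q)^2\bigr)/4$; the first coefficients $q-q^2+\cdots$ agree. Using $\varphi(-q)=(q;q)_\infty^2/(q^2;q^2)_\infty$, we get
\[
 4\sum_{\lambda}\delta(\lambda)q^{|\lambda|}=\frac{1}{(q;q)_\infty}-\frac{(q;q)_\infty^3}{(q^2;q^2)_\infty^2}.
\]
Since $4$ is invertible modulo $5$, it suffices to show that the coefficient of $q^{5n+4}$ in each term on the right is divisible by $5$.

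\textbf{Step 3: the congruences.} For the first term this is Ramanujan's congruence $p(5n+4)\equiv0\pmod5$, which I would quote. For the second term, the key step is to reduce modulo $5$ using $(q^2;q^2)_\infty^5\equiv(q^{10};q^{10})_\infty$:
\[
 \frac{(q;q)_\infty^3}{(q^2;q^2)_\infty^2}\equiv\frac{(q;q)_\infty^3(q^2;q^2)_\infty^3}{(q^{10};q^{10})_\infty}\pmod 5.
\]
The denominator is a series in $q^{10}$, so it does not change residues of exponents modulo $5$. Jacobi's identity $(q;q)_\infty^3=\sum_{k\ge0}(-1)^k(2k+1)q^{k(k+1)/2}$ gives the numerator as a double sum with coefficients $\pm(2k+1)(2m+1)$ at exponent $k(k+1)/2+m(m+1)$. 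That exponent is $\equiv4\pmod5$ exactly when
\[
 (2k+1)^2+2(2m+1)^2\equiv 3+32\equiv0 \pmod 5.
\]
Because $-2\equiv 3$ is a quadratic nonresidue modulo $5$, this forces $2k+1\equiv 2m+1\equiv0\pmod5$. So every such coefficient $\pm(2k+1)(2m+1)$ is divisible by $5$ (indeed by $25$), and the congruence follows.

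The main obstacle is Step 2, identifying the signed Lambert series with $\bigl(1-\varphi(-q)^2\bigr)/4$ and then with the eta-quotient. I would verify the sign bookkeeping carefully, with a few coefficients as a check, before running the mod-$5$ argument.
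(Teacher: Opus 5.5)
Your proof is correct, but it takes a different route from the paper. The sign bookkeeping in your Step 2 checks out: the Lambert series and $\bigl(1-\varphi(-q)^2\bigr)/4$ agree (e.g.\ $q-q^2+0\cdot q^3-q^4+2q^5+\cdots$), and the nonresidue argument in Step 3 is sound.

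The paper never isolates the first moment. It computes the full two-variable series $F(z,q)=\sum_\lambda z^{\delta(\lambda)}q^{|\lambda|}$. The local factors $(1+zq^{4j+1})/(1-q^{2(4j+1)})$ and $(1+z^{-1}q^{4j+3})/(1-q^{2(4j+3)})$, together with the Jacobi triple product, give $F=(q^2;q^2)_\infty^{-2}\sum_r z^rq^{2r^2-r}$. The paper then multiplies through by $(q^2;q^2)_\infty^3$ and uses Jacobi's identity and $(q^2;q^2)_\infty^5\equiv(q^{10};q^{10})_\infty \pmod 5$. A single residue check follows: $2r^2-r+m^2+m\equiv4 \pmod 5$ forces $m\equiv2$, hence $5\mid 2m+1$. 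This shows $[q^{5n+4}]F(z,q)\equiv0\pmod 5$ as a Laurent polynomial in $z$, and the theorem follows by applying $z\,\partial/\partial z$ at $z=1$.

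What the paper's route buys:
\begin{itemize}
\item It is self-contained, whereas you must import $p(5n+4)\equiv0\pmod5$ as an external input.
\item It proves the stronger statement that each count $\#\{\lambda\vdash 5n+4:\delta(\lambda)=j\}$ is divisible by $5$.
\item Since $F(1,q)=1/(q;q)_\infty$, it re-derives Ramanujan's congruence at $z=1$.
\end{itemize}

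What your route buys:
\begin{itemize}
\item An explicit closed form for the moment series, $\sum_\lambda\delta(\lambda)q^{|\lambda|}=\tfrac14\bigl(1/(q;q)_\infty-(q;q)_\infty^3/(q^2;q^2)_\infty^2\bigr)$, which ties $\delta$ to the two-squares Lambert series. This agrees with the paper's $\partial F/\partial z$ at $z=1$, because $\tfrac14\bigl(\psi(q)-(q;q)_\infty^3\bigr)=\sum_r r\,q^{2r^2-r}$ with $\psi(q)=(q^2;q^2)_\infty^2/(q;q)_\infty$.
\item Your nonresidue argument shows that the relevant numerator coefficients $\pm(2k+1)(2m+1)$ are in fact divisible by $25$.
\end{itemize}
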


It is nice to note that the progression $5n+4$ is the one appearing in Ramanujan's classical congruence $p(5n+4)\equiv0\pmod5$~\cite{Ramanujan1921}. 
\begin{proof}
We consider the two-variable generating function
\[
 F(z,q)=\sum_{\lambda}z^{\delta(\lambda)}q^{|\lambda|}.
\]
Even part sizes do not affect the exponent of $z$.  For a part
of size $4j+1$, an odd multiplicity leaves one unpaired part and hence contributes a
factor $z$, whereas an even multiplicity leaves no unpaired part.  Its local
factor is therefore
\[
 1+zq^{4j+1}+q^{2(4j+1)}+zq^{3(4j+1)}+\cdots
 =\frac{1+zq^{4j+1}}{1-q^{2(4j+1)}}.
\]
Similarly, the local factor for a part of size $4j+3$ is
\[
 \frac{1+z^{-1}q^{4j+3}}{1-q^{2(4j+3)}}.
\]
Consequently,
\begin{align*}
 F(z,q)
 &=\frac{(-zq;q^4)_\infty(-z^{-1}q^3;q^4)_\infty}
 {(q^2;q^2)_\infty(q^2;q^4)_\infty}\\
 &=\frac{(q^4;q^4)_\infty(-zq;q^4)_\infty
 (-z^{-1}q^3;q^4)_\infty}{(q^2;q^2)_\infty^2}.
\end{align*}
By the Jacobi triple product~\cite[Chapter~2]{AndrewsPartitions},
\[
 (q^4;q^4)_\infty(-zq;q^4)_\infty(-z^{-1}q^3;q^4)_\infty
 =\sum_{r=-\infty}^{\infty}z^r q^{2r^2-r},
\]
and hence
\[
 F(z,q)=\frac{1}{(q^2;q^2)_\infty^2}
 \sum_{r=-\infty}^{\infty}z^r q^{2r^2-r}.
\]
We use Jacobi's identity~\cite[Chapter~2]{AndrewsPartitions}
\[
 (q^2;q^2)_\infty^3
 =\sum_{m=0}^{\infty}(-1)^m(2m+1)q^{m^2+m}.
\]
Multiplying numerator and denominator by $(q^2;q^2)_\infty^3$ gives
\[
 F(z,q)=\frac{1}{(q^2;q^2)_\infty^5}
 \left(\sum_{r=-\infty}^{\infty}z^r q^{2r^2-r}\right)
 \left(\sum_{m=0}^{\infty}(-1)^m(2m+1)q^{m^2+m}\right).
\]
Since $(1-x)^5\equiv1-x^5\pmod5$, we have
\[
 (q^2;q^2)_\infty^5\equiv(q^{10};q^{10})_\infty\pmod5.
\]
Therefore, coefficientwise modulo $5$,
\[
 F(z,q)\equiv\frac{1}{(q^{10};q^{10})_\infty}
 \left(\sum_{r=-\infty}^{\infty}z^r q^{2r^2-r}\right)
 \left(\sum_{m=0}^{\infty}(-1)^m(2m+1)q^{m^2+m}\right).
\]
The residues modulo $5$ are
\[
 2r^2-r\equiv0,1,1,0,3
 \qquad (r\equiv0,1,2,3,4\pmod5),
\]
and
\[
 m^2+m\equiv0,2,1,2,0
 \qquad (m\equiv0,1,2,3,4\pmod5).
\]
Thus the only way the exponent in the numerator can be congruent to $4$
modulo $5$ is to have
\[
 r\equiv4\pmod5,
 \qquad
 m\equiv2\pmod5.
\]
But then $2m+1\equiv0\pmod5$.  Since
$1/(q^{10};q^{10})_\infty$ contains only powers of $q$ divisible by $5$, it
follows that
\[
 [q^{5n+4}]F(z,q)\equiv0\pmod5
\]
as a Laurent polynomial in $z$.  Finally,
\[
 \sum_{\lambda\vdash 5n+4}\delta(\lambda)
 =\left.[q^{5n+4}]\,z\frac{\partial}{\partial z}F(z,q)\right|_{z=1},
\]
so the required congruence follows.
\end{proof}

\begin{remark}
The proof gives the stronger coefficientwise statement
\[
 \#\{\lambda\vdash5n+4:\delta(\lambda)=j\}\equiv0\pmod5
 \qquad(j\in\mathbb Z).
\]
\end{remark}

\section{Pairing-rank parity and self-conjugate partitions}

We let
\[
 \rho(\lambda)=r(\lambda)-s(\lambda)
\]
be the \emph{pairing rank} of $\lambda$.  For $n\geq0$, we let $E(n)$ and $O(n)$ denote the numbers of partitions of $n$ having even and odd pairing rank, respectively, and we let $\operatorname{sc}(n)$ denote the number of self-conjugate partitions of $n$.

\begin{theorem}\label{thm:rank-parity}
For every $n\geq0$,
\[
 E(n)-O(n)=(-1)^n\operatorname{sc}(n).
\]
\end{theorem}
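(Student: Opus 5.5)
We will work with the two-variable generating function from the Joint Pairing Theorem (Theorem~\ref{thm:joint-pairing}). Letting $N\to\infty$ in Lemma~\ref{lem:finite} gives
\[
 \sum_{\lambda}y^{r(\lambda)}z^{s(\lambda)}q^{|\lambda|}
 =\frac{1}{(yq^2;q^2)_\infty(zq;q^2)_\infty}.
\]
Since $\rho(\lambda)=r(\lambda)-s(\lambda)\equiv r(\lambda)+s(\lambda)\pmod 2$, the parity of the pairing rank equals the parity of the pairing index $T(\lambda)$. Hence it suffices to specialize $y=z=-1$:
\[
 \sum_{n\ge0}\bigl(E(n)-O(n)\bigr)q^n
 =\sum_{\lambda}(-1)^{T(\lambda)}q^{|\lambda|}
 =\frac{1}{(-q;q)_\infty}.
\]
This is also the case $z=-1$ of Theorem~\ref{thm:pairing}.

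Next we compare this with self-conjugate partitions. Folding principal hooks shows that self-conjugate partitions correspond to partitions into distinct odd parts, so
\[
 \sum_{n\ge0}\operatorname{sc}(n)q^n=(-q;q^2)_\infty .
\]
Replacing $q$ by $-q$ shows that the claimed identity is equivalent to
\[
 \frac{1}{(-q;q)_\infty}=(q;q^2)_\infty .
\]
This is Euler's identity $(-q;q)_\infty(q;q^2)_\infty=1$. It follows from
\[
 (-q;q)_\infty=\frac{(q^2;q^2)_\infty}{(q;q)_\infty}
 \quad\text{and}\quad
 (q;q)_\infty=(q;q^2)_\infty(q^2;q^2)_\infty .
\]
Comparing coefficients of $q^n$ then gives $E(n)-O(n)=(-1)^n\operatorname{sc}(n)$.

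No step here is a serious obstacle. The only points needing care are the parity reduction $\rho\equiv T\pmod 2$, which lets us use a single signed specialization, and the sign bookkeeping in $q\mapsto -q$. For the latter, $(-1)^n$ arises because each self-conjugate partition of $n$ has distinct odd parts summing to $n$, and $(q;q^2)_\infty$ weights such a partition by $(-1)^{\#\text{parts}}=(-1)^n$. As a cross-check I would verify small cases: $n=1$ gives $E-O=-1$ and $\operatorname{sc}(1)=1$.
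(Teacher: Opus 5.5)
Your proposal is correct and follows the paper's proof essentially step for step. Both use the parity reduction $(-1)^{\rho(\lambda)}=(-1)^{r(\lambda)+s(\lambda)}$, specialize $y=z=-1$ in the limiting form of Lemma~\ref{lem:finite} to get $1/(-q;q)_\infty=(q;q^2)_\infty$, and compare this with the distinct-odd-parts generating function for self-conjugate partitions under $q\mapsto -q$. Your observation that the signed series is also the $z=-1$ case of the generating-function form of Theorem~\ref{thm:pairing} is a valid shortcut.
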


\begin{proof}
By the definition of the pairing rank,
\[
 E(n)-O(n)=\sum_{\lambda\vdash n}(-1)^{\rho(\lambda)}.
\]
Because $r(\lambda)-s(\lambda)\equiv r(\lambda)+s(\lambda)\pmod2$,
\[
 (-1)^{\rho(\lambda)}
 =(-1)^{r(\lambda)-s(\lambda)}
 =(-1)^{r(\lambda)+s(\lambda)}.
\]
Thus the desired signed generating function is obtained from $P_N(y,z,q)$ by setting $y=z=-1$ and then letting $N\to\infty$.

Lemma~\ref{lem:finite}, together with
\[
 \lim_{N\to\infty}\qbinom[q^2]{N}{t}
 =\frac{1}{(q^2;q^2)_t},
\]
gives, for arbitrary $y$ and $z$,
\begin{align*}
 \sum_{\lambda}y^{r(\lambda)}z^{s(\lambda)}q^{|\lambda|}
 &=\sum_{m,t\geq0}
 \frac{y^mz^tq^{2m+t}}
 {(q^2;q^2)_m(q^2;q^2)_t}\\
 &=\frac{1}{(yq^2;q^2)_\infty(zq;q^2)_\infty}.
\end{align*}
Consequently,
\begin{align}
 \sum_{n\geq0}\bigl(E(n)-O(n)\bigr)q^n
 &=\sum_{\lambda}(-1)^{r(\lambda)+s(\lambda)}q^{|\lambda|}\notag\\
 &=\frac{1}{(-q^2;q^2)_\infty(-q;q^2)_\infty}\notag\\
 &=\frac{1}{(-q;q)_\infty}.\label{eq:rank-signed-gf}
\end{align}
Using $1+q^j=(1-q^{2j})/(1-q^j)$, we obtain
\[
 \frac{1}{(-q;q)_\infty}
 =\frac{(q;q)_\infty}{(q^2;q^2)_\infty}
 =\prod_{j\geq1}(1-q^{2j-1}).
\]

Self-conjugate partitions are equinumerous with partitions into distinct odd parts~\cite[Chapter~1]{AndrewsPartitions}, and therefore
\[
 \sum_{n\geq0}\operatorname{sc}(n)q^n
 =\prod_{j\geq1}(1+q^{2j-1}).
\]
Replacing $q$ by $-q$ gives
\[
 \sum_{n\geq0}(-1)^n\operatorname{sc}(n)q^n
 =\prod_{j\geq1}(1-q^{2j-1}).
\]
Together with \eqref{eq:rank-signed-gf}, this yields
\[
 \sum_{n\geq0}\bigl(E(n)-O(n)\bigr)q^n
 =\sum_{n\geq0}(-1)^n\operatorname{sc}(n)q^n.
\]
Comparing coefficients of $q^n$ proves the theorem.
\end{proof}

\section{Diagonal pairings}\label{sec:diagonal-pairing}

The preceding theorem brings self-conjugate partitions into the pairing theory in a rather direct way.  This suggests asking whether the original pairing procedure has a counterpart directly on the Young diagram.  The following construction provides one.  The original procedure pairs equal parts and records what remains unpaired; here we pair cells reflected across the main diagonal and again study the unpaired remainder.

We let $\lambda$ have Durfee size $d$, and write
\[
 \alpha_i=\lambda_i-d,
 \qquad
 \beta_i=\lambda_i'-d,
 \qquad 1\leq i\leq d.
\]
Thus $\alpha=(\alpha_1,\ldots,\alpha_d)$ is the wing to the right of the Durfee square, while $\beta=(\beta_1,\ldots,\beta_d)$ is the lower wing after reflection in the main diagonal.  Both are partitions with at most $d$ parts, and
\[
 |\lambda|=d^2+|\alpha|+|\beta|.
\]
We superimpose the diagrams of $\alpha$ and $\beta$.  A cell common to both diagrams represents a pair of cells of $\lambda$ reflected across the main diagonal; we call such a pair \emph{diagonally paired}.  We put
\[
 I_i=\min(\alpha_i,\beta_i),
 \qquad
 U_i=\max(\alpha_i,\beta_i).
\]
The cells left after all possible diagonal pairings form the skew diagram $U/I$.

\begin{definition}
The edge-connected components of $U/I$ are the \emph{diagonal blocks} of $\lambda$.  We write $\kappa(\lambda)$ for their number.
\end{definition}

Connected components are the natural units for this construction: a block is the smallest portion that may be moved from one side of the diagonal to the other without destroying the partition property.

\begin{lemma}[Block-flipping lemma]\label{lem:block-flipping}
Fix $d$, $I$, and $U$.  Every diagonal block of $U/I$ may be assigned independently to either wing.  Every such assignment gives a pair of partitions $\alpha,\beta$ satisfying
\[
 \alpha\cap\beta=I,
 \qquad
 \alpha\cup\beta=U,
\]
and every such pair arises in this way.
\end{lemma}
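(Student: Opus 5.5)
We argue directly on Young diagrams. Fix partitions $I\subseteq U$, each with at most $d$ parts, and let $B_1,\dots,B_\kappa$ be the edge-connected components of $U/I$. An assignment is a map $\epsilon:\{1,\dots,\kappa\}\to\{\alpha,\beta\}$. It determines
\[
 \alpha=I\cup\bigcup_{\epsilon(k)=\alpha}B_k,\qquad
 \beta=I\cup\bigcup_{\epsilon(k)=\beta}B_k .
\]
We must check three things: that these are partitions, that $\alpha\cap\beta=I$ and $\alpha\cup\beta=U$, and that every admissible pair arises this way.

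\textbf{Set identities and surjectivity.} The identities $\alpha\cap\beta=I$ and $\alpha\cup\beta=U$ are immediate, because the blocks are disjoint and disjoint from $I$. For the converse, suppose partitions $\alpha,\beta$ satisfy $\alpha\cap\beta=I$ and $\alpha\cup\beta=U$. Row by row this says $\min(\alpha_i,\beta_i)=I_i$ and $\max(\alpha_i,\beta_i)=U_i$. So in row $i$ the cells of $U/I$ lie entirely in $\alpha$ or entirely in $\beta$.

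It then suffices to show each block lies wholly in one wing. Two horizontally adjacent cells of $U/I$ lie in the same row, so they go to the same wing. For vertically adjacent cells $(i,j)$ and $(i+1,j)$, suppose $(i,j)\in\alpha\setminus\beta$ and $(i+1,j)\in\beta\setminus\alpha$. Since $\beta$ is a partition and $(i+1,j)\in\beta$, we get $(i,j)\in\beta$, a contradiction. Connectivity then propagates the choice through the whole block, so $\alpha,\beta$ come from the assignment sending each block to the wing containing it.

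\textbf{The partition property (main step).} We need that $\alpha$ is a partition for every assignment, and symmetrically $\beta$. Take a cell $(i,j)\in\alpha$; we show $(i-1,j)$ and $(i,j-1)$ lie in $\alpha$ whenever $i>1$, respectively $j>1$.

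If $(i,j)\in I$, both cells lie in $I\subseteq\alpha$, since $I$ is a partition.

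If instead $(i,j)\in B_k$ with $\epsilon(k)=\alpha$, then each neighbour lies in $U$, because $U$ is a partition. Such a neighbour is either in $I$, hence in $\alpha$, or in $U/I$. In the latter case it is edge-adjacent to $(i,j)$, so it belongs to the same block $B_k$ and is also assigned to $\alpha$.

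Thus $\alpha$ is a partition, and it has at most $d$ parts since $\alpha\subseteq U$.

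It remains to see that $I$ and $U$ come from actual pairs. This holds because they were defined from a genuine pair of wings via $I_i=\min(\alpha_i,\beta_i)$ and $U_i=\max(\alpha_i,\beta_i)$. Alternatively, the rowwise min and max of two partitions are again partitions: if $\alpha_i\ge\alpha_{i+1}$ and $\beta_i\ge\beta_{i+1}$, then the min and max sequences are also weakly decreasing. This justifies taking $I$ and $U$ as given.

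The one subtlety is that blocks are defined by edge-connectivity, not corner-connectivity. The argument only requires that the left and upper neighbours of a cell of $U/I$ that lie in $U/I$ belong to the same block, and this is exactly edge-adjacency. So the plan closes. The bijection gives exactly $2^{\kappa}$ pairs $(\alpha,\beta)$ for each fixed $(d,I,U)$.
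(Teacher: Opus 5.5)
Your proof is correct and has the same overall structure as the paper's. The converse direction is essentially the paper's first step: rows being initial segments handles horizontally adjacent cells, and weakly decreasing row lengths handle vertically adjacent cells, so each block lies wholly in $\alpha\setminus\beta$ or in $\beta\setminus\alpha$.

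You differ from the paper in how you show that $I$ together with any chosen blocks is again a Young diagram.

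\begin{itemize}
\item The paper argues globally. It asserts that distinct components of a skew diagram are pairwise incomparable in the product order. From this it concludes that every cell of $U$ lying below a cell of a chosen block is in $I$ or in that same block.
\item You argue locally. You check closure only under the immediate left and upper neighbours. This needs just one observation: such a neighbour lies in $U$, and if it is in $U/I$ it is edge-adjacent to the cell, hence in the same block.
\end{itemize}

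Your version is more elementary and self-contained. The paper states its incomparability claim without proof, and that claim actually rests on the convexity of skew shapes: a monotone path between two comparable cells of $U/I$ stays inside $U/I$. Your argument also shows directly why edge-connectivity is the right notion of block. The paper's formulation, in exchange, records the structural picture of blocks as diagonally separated, mutually incomparable pieces.

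You also spell out points the paper leaves tacit:
\begin{itemize}
\item rowwise minima and maxima of partitions are again partitions;
\item the reassembled wings automatically have at most $d$ parts;
\item the count is exactly $2^{\kappa}$.
\end{itemize}
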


\begin{proof}
We regard Young diagrams as order ideals in the product order on positive integer pairs.  If two cells of $U/I$ share an edge, they cannot lie one in $\alpha\setminus\beta$ and the other in $\beta\setminus\alpha$: horizontally this follows because each row of a Young diagram is an initial segment, and vertically it follows because the row lengths are weakly decreasing.  Hence every connected component of $U/I$ lies wholly in one of the two differences.

Distinct connected components of a skew Young diagram are separated diagonally and are therefore pairwise incomparable in the product order.  Consequently, adjoining any chosen collection of components to $I$ again gives an order ideal: if a cell lies in a chosen component, then every smaller cell of $U$ lies either in $I$ or in that same component.  Thus each component may be placed independently in $\alpha$ or in $\beta$.  The construction is reversible.
\end{proof}

Following Atkin~\cite{Atkin1966}, for a partition of Durfee size $d$, we write its successive ranks as
\[
 \sigma_i(\lambda)=\lambda_i-\lambda_i'
 =\alpha_i-\beta_i,
 \qquad 1\leq i\leq d.
\]
We let $\mathcal P^+$ denote the set of partitions whose successive ranks are all nonnegative.

\begin{theorem}[Diagonal Pairing Theorem]\label{thm:diagonal-pairing}
The set of all partitions is a disjoint union of Boolean classes under diagonal-block flips.  A class with $k$ diagonal blocks contains exactly $2^k$ partitions, all of the same size, and contains a unique member of $\mathcal P^+$.

In other words,
\begin{equation}\label{eq:boolean-euler}
 \sum_{\lambda\in\mathcal P^+}2^{\kappa(\lambda)}q^{|\lambda|}
 =\frac{1}{(q;q)_\infty}.
\end{equation}
\end{theorem}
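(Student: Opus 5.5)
The plan is to define the equivalence relation directly from the data $(d,I,U)$ and then apply the Block-flipping Lemma~\ref{lem:block-flipping}. Send each partition $\lambda$ to the triple $(d,I,U)$, where $d$ is its Durfee size and $I=\alpha\cap\beta$, $U=\alpha\cup\beta$ are the intersection and union of its two wings. Two partitions lie in the same class exactly when they have the same triple. These classes are disjoint and together cover all partitions.

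First we check that $I$ and $U$ are partitions with at most $d$ parts. Row by row, the minimum and the maximum of two weakly decreasing sequences are again weakly decreasing.

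Next we show that any pair $(\alpha,\beta)$ of partitions with at most $d$ parts rebuilds a partition of Durfee size exactly $d$. Put the $d\times d$ square in place, attach $\alpha$ to its right, and attach the conjugate of $\beta$ below it. The rows then read $d+\alpha_1\ge\cdots\ge d+\alpha_d\ge d$, followed by the rows $\beta'_j\le d$. So this is a partition. Its Durfee size is $d$, because row $d+1$ has length $\beta'_1\le d$. The map $\lambda\mapsto(d,\alpha,\beta)$ is therefore a bijection onto such triples.

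By Lemma~\ref{lem:block-flipping}, the pairs $(\alpha,\beta)$ with $\alpha\cap\beta=I$ and $\alpha\cup\beta=U$ correspond exactly to the independent assignments of the $k$ components of $U/I$ to one wing or the other. So the class of $(d,I,U)$ has exactly $2^k$ members. Every member has size $d^2+|I|+|U|$, since $|\alpha|+|\beta|=|I|+|U|$. Moreover $\kappa$ depends only on $(I,U)$, so it is constant on the class.

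The unique $\mathcal P^+$ member is the main point. The condition $\sigma_i\ge0$ for all $i$ says $\alpha_i\ge\beta_i$ for every $i$, that is $\alpha=U$ and $\beta=I$. This corresponds to the single assignment that puts every block in $\alpha$, and that assignment exists by the lemma. Conversely, any other assignment puts some cell of $U/I$ in $\beta\setminus\alpha$, which makes some $\beta_i>\alpha_i$. So exactly one member of each class lies in $\mathcal P^+$, namely $(d,U,I)$ read as $(d,\alpha,\beta)$.

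For \eqref{eq:boolean-euler}, group the series $\sum_\lambda q^{|\lambda|}=1/(q;q)_\infty$ by classes. Each class contributes $2^{\kappa}q^{|\lambda|}$, and we index it by its $\mathcal P^+$ representative $\lambda$.

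The only delicate step is the claim that Lemma~\ref{lem:block-flipping} gives a class of size exactly $2^k$. We need distinct assignments to give distinct pairs. This holds because the blocks are nonempty and disjoint, so the set $\alpha\setminus I$ determines the assignment.
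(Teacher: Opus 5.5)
Your proposal is correct and takes the same approach as the paper. You fix $(d,I,U)$, use Lemma~\ref{lem:block-flipping} to get $2^{\kappa}$ independent block assignments, identify the all-blocks-in-$\alpha$ assignment ($\alpha=U$, $\beta=I$) as the unique member of $\mathcal P^+$, and sum over classes. The extra checks you add (that $I,U$ are partitions, that $(d,\alpha,\beta)$ rebuilds a partition of Durfee size exactly $d$, and that distinct assignments give distinct pairs) are left implicit in the paper, and all of them are sound.
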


\begin{proof}
By Lemma~\ref{lem:block-flipping}, fixing $I$ and $U$ gives one independent binary choice for each diagonal block, so the corresponding Boolean class has cardinality $2^{\kappa(\lambda)}$.  The size is unchanged because a flip merely moves cells between the two wings.

Among these choices there is exactly one for which every block is placed in the upper wing.  For this member,
\[
 \alpha=U,
 \qquad
 \beta=I,
\]
so $\alpha_i\geq\beta_i$ for every $i$, or equivalently $\sigma_i(\lambda)\geq0$ for every successive rank.  Conversely, if all successive ranks are nonnegative, then necessarily $\alpha=U$ and $\beta=I$.  This proves uniqueness.  Summing the sizes of the Boolean classes gives \eqref{eq:boolean-euler}.
\end{proof}

For example, we take $\lambda=(4,3,1)$.  Its Durfee size is $2$, with
\[
 \alpha=(2,1),
 \qquad
 \beta=(1,0).
\]
The two cells of $U/I$ meet only at a corner, so $\kappa(\lambda)=2$.  Independently reflecting these two blocks gives the four partitions
\[
 (4,3,1),\qquad (4,2,2),\qquad (3,3,1,1),\qquad (3,2,2,1).
\]
The unique member with all successive ranks nonnegative is $(4,3,1)$.  Ordinary conjugation flips every block, and therefore acts as the antipodal map on each Boolean class.

The zero-block case links the construction directly to the preceding section.

\begin{corollary}\label{cor:zero-block-self-conjugate}
A partition has no diagonal blocks if and only if it is self-conjugate.  Consequently, Theorem~\ref{thm:rank-parity} may be written
\[
 E(n)-O(n)
 =(-1)^n\#\{\lambda\vdash n:\kappa(\lambda)=0\}.
\]
Thus the parity of the pairing rank detects exactly the partitions for which the diagonal pairing is complete.
\end{corollary}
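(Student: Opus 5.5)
The corollary has two parts. The first is a direct consequence of the definitions. The second is a rewriting of Theorem~\ref{thm:rank-parity}.

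\emph{Blocks versus self-conjugacy.} By definition, $\kappa(\lambda)=0$ exactly when the skew diagram $U/I$ is empty. That happens exactly when $U_i=I_i$ for every $1\le i\le d$, that is, when $\max(\alpha_i,\beta_i)=\min(\alpha_i,\beta_i)$, i.e.\ $\alpha_i=\beta_i$, for all $i$. In terms of $\lambda$, this says $\lambda_i=\lambda_i'$ for $1\le i\le d$, where $d$ is the Durfee size.

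It remains to check that this forces $\lambda=\lambda'$. A partition is determined by $d$ and its two wings $\alpha,\beta$. Conjugation preserves $d$ and interchanges the two wings: the first $d$ rows of $\lambda'$ are the first $d$ columns of $\lambda$, so the right wing of $\lambda'$ is $\beta$. Below the Durfee square, the parts of $\lambda$ are recovered as the conjugate of $\beta$, and likewise for $\lambda'$. Hence $\alpha=\beta$ implies $\lambda'=\lambda$. Conversely, if $\lambda=\lambda'$ then $\lambda_i=\lambda_i'$ for all $i$, so $\alpha=\beta$ and $U/I=\varnothing$.

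An alternative check is Lemma~\ref{lem:block-flipping} together with Theorem~\ref{thm:diagonal-pairing}. Conjugation flips every block, so it acts as the antipodal map on each Boolean class. A fixed point exists only when the class has $2^0=1$ element, i.e.\ $\kappa=0$; when $\kappa\ge1$, flipping a nonempty block genuinely changes the wings, so there is no fixed point.

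\emph{Rewriting the parity theorem.} Substitute this characterization into Theorem~\ref{thm:rank-parity}. This gives $\operatorname{sc}(n)=\#\{\lambda\vdash n:\kappa(\lambda)=0\}$, and hence the displayed identity. The final sentence of the corollary is an interpretation of this identity and needs no separate argument.

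The only step needing any care is the reconstruction claim: that $(d,\alpha,\beta)$ determines $\lambda$ and that conjugation swaps $\alpha$ with $\beta$. This is standard and can be cited from the definition of the wings given just before the definition of diagonal blocks.
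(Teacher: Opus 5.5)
Your proposal is correct and follows the same route as the paper: $\kappa(\lambda)=0 \iff U=I \iff \alpha=\beta \iff \lambda=\lambda'$, and then $\operatorname{sc}(n)=\#\{\lambda\vdash n:\kappa(\lambda)=0\}$ is substituted into Theorem~\ref{thm:rank-parity}. The only difference is that you spell out the last equivalence, namely that $(d,\alpha,\beta)$ determines $\lambda$ and that conjugation swaps the two wings, which the paper compresses into the remark that the Durfee square is symmetric.
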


\begin{proof}
We have $\kappa(\lambda)=0$ if and only if $U=I$, equivalently $\alpha=\beta$.  Since the Durfee square is itself symmetric, this is equivalent to $\lambda=\lambda'$.
\end{proof}

The Boolean decomposition also gives a second realization of Euler's partition generating function.  More is true: the entire distribution of the diagonal blocks has a simple form.

\begin{theorem}\label{thm:refined-diagonal-pairing}
For every $m\geq0$,
\begin{equation}\label{eq:block-moment-gf}
 \sum_{\lambda\in\mathcal P^+}
 \binom{\kappa(\lambda)}{m}q^{|\lambda|}
 =\frac{q^{2m^2}-q^{2(m+1)^2}}{(q;q)_\infty}.
\end{equation}
Equivalently, for every $n,m\geq0$,
\begin{equation}\label{eq:block-moment-coeff}
 \sum_{\substack{\lambda\vdash n\\ \lambda\in\mathcal P^+}}
 \binom{\kappa(\lambda)}{m}
 =p(n-2m^2)-p\bigl(n-2(m+1)^2\bigr),
\end{equation}
where $p(N)=0$ for $N<0$.
\end{theorem}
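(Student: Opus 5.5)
The plan is to recast \eqref{eq:block-moment-coeff} as a count over \emph{all} partitions and then prove a cumulative form of it bijectively. By Lemma~\ref{lem:block-flipping} and Theorem~\ref{thm:diagonal-pairing}, each $\lambda\in\mathcal P^+$ with $\kappa(\lambda)$ blocks heads a Boolean class. Its members correspond to subsets $S$ of the blocks, namely the blocks reflected into the lower wing $\beta\setminus\alpha$. Hence
\[
\sum_{\lambda\in\mathcal P^+}\binom{\kappa(\lambda)}{m}q^{|\lambda|}
=\sum_{\mu}[\,\nu(\mu)=m\,]\,q^{|\mu|},
\]
where $\nu(\mu)$ is the number of diagonal blocks of $\mu$ lying in $\beta\setminus\alpha$, i.e.\ the number of ``negative'' blocks. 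Summing over $m'\geq m$ and telescoping, it suffices to prove
\begin{equation*}
\#\{\mu\vdash n:\nu(\mu)\geq m\}=p(n-2m^2)\qquad(m\geq0).
\end{equation*}
The case $m=0$ is trivial. Differencing in $m$ then returns both \eqref{eq:block-moment-coeff} and \eqref{eq:block-moment-gf}.

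Next I would encode $\mu$ through its Durfee decomposition $(d,\alpha,\beta)$ as a pair of lattice paths in a $d$-row strip: the boundary of $\alpha$ and the boundary of $\beta$, both starting and ending at the same points. Rows $i$ with $\alpha_i=\beta_i$ are places where the paths touch along an edge. A diagonal block is a maximal region enclosed between two consecutive touchings, so $U/I$ is exactly the region between the paths, split at touching edges. A negative block is such a region in which the $\beta$-path lies outside the $\alpha$-path. So $\nu(\mu)$ counts the excursions of $\beta-\alpha$ to one side, and $\nu(\mu)\geq m$ says there are at least $m$ such excursions.

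The core step is an iterated reflection/switching map in the style of Bressoud's proof of Andrews' successive-rank theorems~\cite{Andrews1972,Bressoud1980}. First, I would locate the first negative excursion and swap the tails of the two paths at the last touching before it; this is the Gessel--Viennot tail swap, with the touching point defining a canonical first crossing. Second, I would shift one tail by one unit so that the configuration becomes a pair of paths with different endpoints, and reduce $d$ or otherwise adjust the pair so that it again encodes a partition. Each such operation should remove exactly one negative block and lower the size by an arithmetic progression of amounts summing to $2m^2$ after $m$ steps, namely $2(2j-1)$ at step $j$. After $m$ steps one should obtain a bijection between partitions of $n$ with $\nu\geq m$ and partitions of $n-2m^2$ with an unconstrained Durfee pair, which is counted by $p(n-2m^2)$. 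As a consistency check, $(q^{2m^2}-q^{2(m+1)^2})/(q;q)_\infty$ summed over $m$ with weight $(1+w)^m$ should specialize to \eqref{eq:boolean-euler} at $w=1$. It does, since $\sum_m 2^m(q^{2m^2}-q^{2(m+1)^2})=1+\sum_{m\geq1}2^{m-1}q^{2m^2}$ has constant term $1$ and generally gives an alternative check against small $n$ such as $n=8,9$.

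I expect the main obstacle to be the precise definition of the reflection step. It must respect the subtle connectivity of $U/I$: two adjacent rows with nonzero differences belong to the same block only when $I_i<U_{i+1}$, so blocks are not simply maximal runs of nonzero successive ranks. It must also be well defined and invertible, with the weight drop exactly $4j-2$ at step $j$. If a direct bijection proves elusive, the fallback is algebraic. I would write the Durfee-square generating function with a transfer-matrix weight $w$ per negative block, summing over $d$ and over sign patterns of successive ranks. I would then reduce it, via the $q$-binomial theorem and a $q$-Vandermonde step as in Lemma~\ref{lem:residual-span}, to $\sum_m w^m(q^{2m^2}-q^{2(m+1)^2})/(q;q)_\infty$ with the theta-type telescoping visible.
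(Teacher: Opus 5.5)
Your reduction is correct: by the Boolean classes, $\sum_{\lambda\in\mathcal P^+}\binom{\kappa(\lambda)}{m}q^{|\lambda|}$ counts all partitions with exactly $m$ blocks in the lower wing. So the theorem is equivalent to $\#\{\mu\vdash n:\nu(\mu)\geq m\}=p(n-2m^2)$. After this reduction, however, the whole content of the theorem sits in the bijection of your third paragraph, and that bijection is never constructed.

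\begin{itemize}
\item The one concrete operation you name is swapping tails of the $\alpha$- and $\beta$-boundaries at a contact point. This keeps both paths in the same $d$-row strip with the same endpoints, so it only flips every block beyond the contact point. It stays inside the Boolean class and preserves size.
\item All of the size reduction is therefore delegated to ``shift one tail by one unit \ldots\ reduce $d$ or otherwise adjust.'' That step is not defined and has no inverse. Nothing in the Durfee encoding, with weight $d^2+|\alpha|+|\beta|$, visibly produces a drop of exactly $4j-2$ or the quadratic $2m^2$.
\item The algebraic fallback is equally unspecified. No formula is written down, and the coupling $I_i<U_{i+1}$ between adjacent rows is exactly what obstructs a naive product or $q$-Vandermonde evaluation.
\item Your consistency check is wrong. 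The weight $(1+w)^m$ at $w=1$ gives $(q;q)_\infty\sum_{\lambda\in\mathcal P^+}3^{\kappa(\lambda)}q^{|\lambda|}=1+\sum_{m\geq1}2^{m-1}q^{2m^2}$, which is not $1$. The check against \eqref{eq:boolean-euler} is the unit-weight sum, which telescopes to exactly $1$.
\end{itemize}

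The missing idea is an encoding in which $q^{2m^2}$ appears on its own, and the paper uses the Frobenius symbol for this.

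\begin{itemize}
\item Scan the levels $j\geq0$ downward. Take an up-step at $j\in A\setminus B$, a down-step at $j\in B\setminus A$, and no vertical step otherwise. Weight each element of $A$ or $B$ at level $j$ by $q^{j+1/2}$.
\item The size is then a sum of level weights that does not depend on the terminal height.
\item For $\lambda\in\mathcal P^+$, two adjacent nonempty rows $i,i+1$ lie in one block iff $b_i\leq a_{i+1}$. This is exactly the condition that the path does not return to $0$ between them. So blocks are primitive excursions, and flipping a block reverses its excursion; the negative blocks of a general $\mu$ are the negative excursions of its bridge.
\item The Jacobi triple product assigns total weight $q^{2m^2}/(q;q)_\infty$ to all paths ending at height $2m$.
\item The paper then picks $m$ of the $\kappa(\lambda)$ excursions and turns their final down-steps into up-steps. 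This is a weight-preserving bijection onto nonnegative paths ending at $2m$.
\item Finally it removes the paths that reach $-1$ by reflecting at the first visit to $-1$, which matches them with all paths ending at $2m+2$.
\end{itemize}

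In this language your cumulative identity also follows at once. Iterating that reflection splits all paths ending at $2m$ into nonnegative paths ending at $2m'$ for $m'\geq m$.
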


\begin{proof}
We let $\lambda\in\mathcal P^+$ have Frobenius symbol (in the standard notation; see, for example,~\cite[Chapter~1]{AndrewsPartitions})
\[
 \begin{pmatrix}
  a_1&a_2&\cdots&a_d\\
  b_1&b_2&\cdots&b_d
 \end{pmatrix},
 \qquad
 a_1>\cdots>a_d\geq0,
 \quad
 b_1>\cdots>b_d\geq0.
\]
The condition $\lambda\in\mathcal P^+$ is exactly $a_i\geq b_i$ for every $i$.  We put
\[
 A=\{a_1,\ldots,a_d\},
 \qquad
 B=\{b_1,\ldots,b_d\}.
\]
We scan the nonnegative integers from large to small.  At level $j$, we take an up-step if $j\in A\setminus B$, a down-step if $j\in B\setminus A$, and no vertical step if $j$ belongs to both sets or to neither.  After level $j$ has been scanned, the height is the difference between the numbers of elements of $A$ and $B$ that are at least $j$.  Thus the inequalities $a_i\geq b_i$ for every $i$ are equivalent to the path never going below height $0$; since $|A|=|B|$, it ends at height $0$.

Each return to height $0$ marks the end of exactly one maximal interval on which the two wings differ.  In the Young diagram this interval is one connected component of $U/I$.  Hence
\begin{equation}\label{eq:blocks-excursions}
 \kappa(\lambda)
 =\text{the number of primitive positive excursions of the path}.
\end{equation}

We choose $m$ of these excursions and, in each chosen excursion, reverse its final down-step to an up-step.  The result is a nonnegative path ending at height $2m$.  This operation is bijective: starting with a nonnegative path ending at $2m$, we successively reverse the last up-step from height $2j-1$ to height $2j$, for $j=m,m-1,\ldots,1$.  We obtain a nonnegative bridge together with $m$ distinguished primitive excursions.  The operation changes only the direction attached to a level, not that level itself, and therefore preserves the $q$-weight.

It remains to enumerate the resulting paths.  At a fixed level $j\geq0$ there are four possibilities: neither $A$ nor $B$ contains $j$, only $A$ contains $j$, only $B$ contains $j$, or both contain $j$.  We give these possibilities weights
\[
 1,
 \qquad xq^{j+1/2},
 \qquad x^{-1}q^{j+1/2},
 \qquad q^{2j+1},
\]
respectively.  The exponent of $x$ records the final height, while the $q$-weight is
\[
 q^{\sum_{a\in A}a+\sum_{b\in B}b+(|A|+|B|)/2}.
\]
For a bridge this is exactly $q^{|\lambda|}$.  By the Jacobi triple product~\cite[Chapter~2]{AndrewsPartitions},
\begin{equation}\label{eq:triple-path}
 \prod_{j\geq0}
 (1+xq^{j+1/2})(1+x^{-1}q^{j+1/2})
 =\frac{1}{(q;q)_\infty}
 \sum_{r\in\mathbb Z}x^rq^{r^2/2}.
\end{equation}
Thus unrestricted paths ending at height $2m$ have generating function
\[
 \frac{q^{2m^2}}{(q;q)_\infty}.
\]

Finally, we apply the reflection principle.  We reflect the initial segment of a path ending at height $2m$ through its first visit to height $-1$.  Up-steps and down-steps are interchanged at the same levels, so the weight is preserved, and the endpoint becomes $2m+2$.  This is a bijection from paths that fall below $0$ to unrestricted paths ending at height $2m+2$.  Therefore the nonnegative paths ending at height $2m$ have generating function
\[
 \frac{q^{2m^2}-q^{2(m+1)^2}}{(q;q)_\infty},
\]
which proves \eqref{eq:block-moment-gf}; coefficient extraction gives \eqref{eq:block-moment-coeff}.
\end{proof}

The coefficient form has the following simple interpretation.  Since
\[
 p(N)-p(N-r)
\]
counts partitions of $N$ in which the part $r$ is absent, Theorem~\ref{thm:refined-diagonal-pairing} may therefore be read as follows.

\begin{corollary}\label{cor:blocks-missing-part}
For $m\geq0$, the number of ways to choose $m$ diagonal blocks from a partition of $n$ whose successive ranks are all nonnegative equals the number of partitions of $n-2m^2$ having no part equal to $4m+2$.
\end{corollary}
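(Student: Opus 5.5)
The plan is to read Corollary~\ref{cor:blocks-missing-part} directly off the coefficient form \eqref{eq:block-moment-coeff} of Theorem~\ref{thm:refined-diagonal-pairing}.

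First, I interpret the left side combinatorially. For a fixed $\lambda\in\mathcal P^+$, the quantity $\binom{\kappa(\lambda)}{m}$ counts the ways to choose $m$ of its $\kappa(\lambda)$ diagonal blocks. Summing over $\lambda\vdash n$ in $\mathcal P^+$ therefore counts the pairs ($\lambda$, a set of $m$ diagonal blocks of $\lambda$). This is exactly the left side of \eqref{eq:block-moment-coeff}.

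Next, I rewrite the right side with $N=n-2m^2$. Since
\[
 2(m+1)^2-2m^2=4m+2,
\]
we have $n-2(m+1)^2=N-(4m+2)$. The right side of \eqref{eq:block-moment-coeff} therefore becomes $p(N)-p(N-r)$ with $r=4m+2$.

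It remains to show that, for $r\geq1$, the difference $p(N)-p(N-r)$ counts the partitions of $N$ with no part equal to $r$. The generating function of such partitions is
\[
 \frac{1-q^r}{(q;q)_\infty}.
\]
Equivalently, removing one copy of $r$ gives a bijection from the partitions of $N$ containing at least one part $r$ onto all partitions of $N-r$. This remains valid when $N<0$ or $N-r<0$, with the convention $p(M)=0$ for $M<0$. Note that $r=4m+2\geq 2$, so the case $m=0$ is included and corresponds to partitions of $n$ with no part $2$.

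No step poses a real obstacle. The only point needing care is checking that the exponent difference is exactly $4m+2$ and that the convention $p(M)=0$ for $M<0$ handles the small-$n$ cases.
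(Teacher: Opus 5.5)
Your proposal is correct and is essentially the paper's argument. The paper likewise reads the corollary off the coefficient form \eqref{eq:block-moment-coeff}, interpreting $\binom{\kappa(\lambda)}{m}$ as choices of $m$ blocks, using $2(m+1)^2-2m^2=4m+2$, and using the fact that $p(N)-p(N-r)$ counts partitions of $N$ with no part equal to $r$.
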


In particular,
\[
 \#\{\lambda\vdash n:\lambda\in\mathcal P^+\}=p(n)-p(n-2),
\]
and
\[
 \sum_{\substack{\lambda\vdash n\\\lambda\in\mathcal P^+}}
 \kappa(\lambda)=p(n-2)-p(n-8).
\]
Taking $m=0$ recovers the classical Andrews--Bressoud enumeration: the number of partitions of $n$ whose successive ranks are all nonnegative equals the number of partitions of $n$ with no part equal to $2$, namely $p(n)-p(n-2)$~\cite{Andrews1972,Bressoud1980}.  A direct bijection was given by Corteel, Savage, and Venkatraman~\cite{CorteelSavageVenkatraman1998}; see also the more recent lattice-path treatment in~\cite{CorteelElizaldeSavage2023}.  The second displayed identity, and more generally Theorem~\ref{thm:refined-diagonal-pairing}, refine this classical case by the new diagonal-block statistic.

The binomial moments also assemble into a compact two-variable series.

\begin{corollary}\label{cor:block-distribution}
We have
\begin{equation}\label{eq:block-distribution-positive}
 \sum_{\lambda\in\mathcal P^+}t^{\kappa(\lambda)}q^{|\lambda|}
 =\frac{1+(t-2)\displaystyle\sum_{m\geq1}(t-1)^{m-1}q^{2m^2}}
 {(q;q)_\infty}.
\end{equation}
Consequently,
\begin{equation}\label{eq:block-distribution-all}
 \sum_{\lambda}z^{\kappa(\lambda)}q^{|\lambda|}
 =\frac{1+2(z-1)\displaystyle\sum_{m\geq1}(2z-1)^{m-1}q^{2m^2}}
 {(q;q)_\infty}.
\end{equation}
\end{corollary}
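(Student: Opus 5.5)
The first identity \eqref{eq:block-distribution-positive} follows by turning the binomial moments of Theorem~\ref{thm:refined-diagonal-pairing} into a distribution via $t^{\kappa}=\sum_{m\ge0}\binom{\kappa}{m}(t-1)^m$. Multiplying \eqref{eq:block-moment-gf} by $(t-1)^m$ and summing over $m$ gives
\[
 (q;q)_\infty\sum_{\lambda\in\mathcal P^+}t^{\kappa(\lambda)}q^{|\lambda|}
 =\sum_{m\ge0}(t-1)^m\bigl(q^{2m^2}-q^{2(m+1)^2}\bigr).
\]
Everything here is a formal power series in $q$, and only finitely many $m$ contribute to each coefficient, so the interchange of sums is harmless. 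I will then reindex the second part of the sum by $m\mapsto m-1$ and collect the coefficient of $q^{2m^2}$. For $m=0$ it is $1$. For $m\ge1$ it is
\[
 (t-1)^m-(t-1)^{m-1}=(t-2)(t-1)^{m-1}.
\]
This is exactly the numerator in \eqref{eq:block-distribution-positive}.

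For \eqref{eq:block-distribution-all} I will use Theorem~\ref{thm:diagonal-pairing}. Every partition lies in a unique Boolean class, and that class contains a unique representative $\lambda\in\mathcal P^+$. All $2^{\kappa(\lambda)}$ members of the class have the same size. They also have the same $\kappa$, since by Lemma~\ref{lem:block-flipping} a flip does not change $I$ or $U$, and therefore does not change $U/I$ or its components. Hence
\[
 \sum_{\lambda}z^{\kappa(\lambda)}q^{|\lambda|}
 =\sum_{\lambda\in\mathcal P^+}(2z)^{\kappa(\lambda)}q^{|\lambda|}.
\]
Substituting $t=2z$ into \eqref{eq:block-distribution-positive} gives $t-2=2(z-1)$ and $t-1=2z-1$, which yields \eqref{eq:block-distribution-all}.

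The only point needing care is the invariance of $\kappa$ under block flips, which is what justifies the substitution $t=2z$. This comes straight from the fact that $I=\alpha\cap\beta$ and $U=\alpha\cup\beta$ are fixed within a class. As a sanity check, setting $z=1$ in \eqref{eq:block-distribution-all} recovers $1/(q;q)_\infty$, and setting $t=2$ in \eqref{eq:block-distribution-positive} recovers \eqref{eq:boolean-euler}.
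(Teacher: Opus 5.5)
Your proposal is correct and follows the paper's proof essentially step for step: the expansion $t^{\kappa}=\sum_{m}\binom{\kappa}{m}(t-1)^m$ combined with Theorem~\ref{thm:refined-diagonal-pairing} and a telescoping reindexing gives the first identity, and the Boolean-class decomposition with $\kappa$ constant on each class justifies the substitution $t=2z$. You also spell out two points the paper leaves implicit: that $\kappa$ is constant on a class because $I$ and $U$ are fixed in Lemma~\ref{lem:block-flipping}, and that the formal interchange of sums is legitimate.
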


\begin{proof}
Using
\[
 t^k=\sum_{m\geq0}\binom{k}{m}(t-1)^m
\]
and Theorem~\ref{thm:refined-diagonal-pairing}, we obtain
\begin{align*}
 \sum_{\lambda\in\mathcal P^+}t^{\kappa(\lambda)}q^{|\lambda|}
 &=\frac{1}{(q;q)_\infty}
 \sum_{m\geq0}(t-1)^m
 \bigl(q^{2m^2}-q^{2(m+1)^2}\bigr)\\
 &=\frac{1+(t-2)\displaystyle\sum_{m\geq1}(t-1)^{m-1}q^{2m^2}}
 {(q;q)_\infty}.
\end{align*}
This proves \eqref{eq:block-distribution-positive}.  Each Boolean class indexed by $\mu\in\mathcal P^+$ contains $2^{\kappa(\mu)}$ members, all with the same number of diagonal blocks.  Hence
\[
 \sum_{\lambda}z^{\kappa(\lambda)}q^{|\lambda|}
 =\sum_{\mu\in\mathcal P^+}(2z)^{\kappa(\mu)}q^{|\mu|},
\]
and \eqref{eq:block-distribution-all} follows from \eqref{eq:block-distribution-positive} with $t=2z$.
\end{proof}

The two extreme cases clarify the role of the new pairing.  At $z=0$, only completely diagonally paired partitions survive, namely the self-conjugate partitions.  At $z=1$, the Boolean classes reassemble to give all ordinary partitions.  Thus the same diagrammatic pairing which places the self-conjugate term in Theorem~\ref{thm:rank-parity} into a pairing framework also provides a natural passage from self-conjugate partitions to the full partition function.

\section{The overpartition quotient and even principal hooks}\label{sec:overpartition-quotient}

The diagonal pairing of Section~\ref{sec:diagonal-pairing} gives a geometric extension of the self-conjugate phenomenon.  We return to the two-variable kernel of the Joint Pairing Theorem, whose signed specialization leads in a different direction.  We write
\begin{equation}\label{eq:J-kernel}
 \mathcal J(y,z;q)
 :=\sum_{\lambda}y^{r(\lambda)}z^{s(\lambda)}q^{|\lambda|}
 =\frac{1}{(yq^2;q^2)_\infty(zq;q^2)_\infty}.
\end{equation}

We recall that an overpartition is a partition in which the first occurrence of each distinct part may be overlined~\cite{CorteelLovejoy2004}.  For an overpartition $\pi$, we let $e(\pi)$ and $o(\pi)$ denote the numbers of even and odd parts, respectively, counted with multiplicity and independently of overlining.

\begin{theorem}\label{thm:pairing-quotient}
We have
\begin{equation}\label{eq:pairing-quotient}
 \sum_{\pi\in\overline{\mathcal P}}
 y^{e(\pi)}z^{o(\pi)}q^{|\pi|}
 =\frac{\mathcal J(y,z;q)}{\mathcal J(-y,-z;q)}.
\end{equation}
\end{theorem}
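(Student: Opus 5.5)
The plan is a direct product computation, using the closed form \eqref{eq:J-kernel} for $\mathcal J$ (which rests on Lemma~\ref{lem:finite} and the proof of Theorem~\ref{thm:joint-pairing}). Substituting $(y,z)\mapsto(-y,-z)$ gives
\[
 \frac{\mathcal J(y,z;q)}{\mathcal J(-y,-z;q)}
 =\frac{(-yq^2;q^2)_\infty(-zq;q^2)_\infty}{(yq^2;q^2)_\infty(zq;q^2)_\infty}
 =\prod_{j\geq1}\frac{1+yq^{2j}}{1-yq^{2j}}
 \prod_{j\geq1}\frac{1+zq^{2j-1}}{1-zq^{2j-1}}.
\]
All manipulations are formal power series identities in $q$ with coefficients polynomial in $y,z$, so the rearrangements are legitimate.

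Next, I will compute the overpartition side one part size at a time. Fix a part size $k$ and let $w=y$ if $k$ is even and $w=z$ if $k$ is odd. Since $e(\pi)$ and $o(\pi)$ ignore overlining, the local contribution of $k$ is
\[
 1+\sum_{\mu\geq1}2\,w^{\mu}q^{k\mu}.
\]
Here the $1$ corresponds to $k$ being absent. For multiplicity $\mu\ge1$ there are two choices, depending on whether the first occurrence is overlined. This local factor simplifies to
\[
 1+\frac{2wq^k}{1-wq^k}=\frac{1+wq^k}{1-wq^k}.
\]
Equivalently, the overlined parts form a partition into distinct parts and the non-overlined parts form an ordinary partition, and the two are independent.

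Taking the product over all $k\ge1$, and separating even $k=2j$ (variable $y$) from odd $k=2j-1$ (variable $z$), gives exactly the displayed product. This proves \eqref{eq:pairing-quotient}.

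There is no serious obstacle. The only point needing care is to check that the sign substitution in $\mathcal J$ turns each factor $(1-wq^k)^{-1}$ into $(1+wq^k)^{-1}$, so that the quotient has the local form $(1+wq^k)/(1-wq^k)$ rather than something with mismatched parities.
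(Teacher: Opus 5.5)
Your proposal is correct and follows essentially the same route as the paper: substitute the closed form \eqref{eq:J-kernel} to write the quotient as $\prod_{k\geq1}(1+wq^k)/(1-wq^k)$ with $w=y$ for even $k$ and $w=z$ for odd $k$. Each such factor is then identified as the local overpartition generating function $1+2wq^k+2w^2q^{2k}+\cdots$ for the part size $k$.
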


\begin{proof}
By \eqref{eq:J-kernel},
\begin{align*}
 \frac{\mathcal J(y,z;q)}{\mathcal J(-y,-z;q)}
 &=\frac{(-yq^2;q^2)_\infty}{(yq^2;q^2)_\infty}
   \frac{(-zq;q^2)_\infty}{(zq;q^2)_\infty}\\
 &=\prod_{j\geq1}\frac{1+yq^{2j}}{1-yq^{2j}}
   \prod_{j\geq1}\frac{1+zq^{2j-1}}{1-zq^{2j-1}}.
\end{align*}
For a fixed part size $k$, the factor
\[
 \frac{1+xq^k}{1-xq^k}
 =1+2xq^k+2x^2q^{2k}+\cdots
\]
is the local generating function for the multiplicity of the part $k$ in an overpartition: once the multiplicity is positive, there are two choices according as the first occurrence is overlined or not.  Taking $x=y$ for even $k$ and $x=z$ for odd $k$ gives \eqref{eq:pairing-quotient}.
\end{proof}

We let $\overline p(m)$ denote the number of overpartitions of $m$.  Specializing Theorem~\ref{thm:pairing-quotient} at $y=z=1$ and using Theorem~\ref{thm:rank-parity} gives a particularly simple quotient identity.

\begin{corollary}\label{cor:overpartition-pairing-quotient}
We have
\begin{equation}\label{eq:overpartition-pairing-quotient}
 \sum_{m\geq0}\overline p(m)q^m
 =\frac{\mathcal J(1,1;q)}{\mathcal J(-1,-1;q)}
 =\frac{\displaystyle\sum_{n\geq0}p(n)q^n}
        {\displaystyle\sum_{n\geq0}(-1)^n\operatorname{sc}(n)q^n}.
\end{equation}
\end{corollary}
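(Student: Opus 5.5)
The corollary follows by specializing Theorem~\ref{thm:pairing-quotient} at $y=z=1$ and then identifying the numerator and the denominator separately.

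First, I set $y=z=1$ in \eqref{eq:pairing-quotient}. The weight $y^{e(\pi)}z^{o(\pi)}$ becomes $1$ for every overpartition $\pi$. The left side therefore collapses to $\sum_{\pi}q^{|\pi|}=\sum_{m\ge0}\overline p(m)q^m$. This gives the first equality, with right side $\mathcal J(1,1;q)/\mathcal J(-1,-1;q)$.

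Next comes the numerator. By the definition \eqref{eq:J-kernel}, $\mathcal J(1,1;q)=\sum_\lambda q^{|\lambda|}=\sum_{n}p(n)q^n$. Equivalently, $1/\bigl((q^2;q^2)_\infty(q;q^2)_\infty\bigr)=1/(q;q)_\infty$.

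Then the denominator. By \eqref{eq:J-kernel}, $\mathcal J(-1,-1;q)=\sum_\lambda(-1)^{r(\lambda)+s(\lambda)}q^{|\lambda|}$. As observed in the proof of Theorem~\ref{thm:rank-parity}, this equals $\sum_n(E(n)-O(n))q^n$, because $r+s\equiv r-s\pmod 2$. Theorem~\ref{thm:rank-parity} identifies that series as $\sum_n(-1)^n\operatorname{sc}(n)q^n$. A direct check is also available: the product $(q;q^2)_\infty$ equals $1/(-q;q)_\infty$, as computed in \eqref{eq:rank-signed-gf}.

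No step is a genuine obstacle. The only point needing care is the legitimacy of division in the formal power series ring. This is fine because $\mathcal J(-1,-1;q)$ has constant term $1$, so it is invertible.
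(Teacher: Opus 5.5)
Your proposal is correct and follows the paper's own route: you specialize Theorem~\ref{thm:pairing-quotient} at $y=z=1$, read $\mathcal J(1,1;q)$ directly as $\sum_n p(n)q^n$, and identify $\mathcal J(-1,-1;q)$ with $\sum_n\bigl(E(n)-O(n)\bigr)q^n=\sum_n(-1)^n\operatorname{sc}(n)q^n$ via \eqref{eq:rank-signed-gf} and Theorem~\ref{thm:rank-parity}. Your remark that $\mathcal J(-1,-1;q)$ has constant term $1$, so the quotient makes sense as a formal power series, is a harmless addition that the paper leaves implicit.
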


We next give a Young-diagram realization of the same quotient.  For $m\geq0$, we let $\mathcal H_{\mathrm{even}}(2m)$ be the set of partitions of $2m$ for which the hook length of every cell on the main diagonal is even.  Equivalently, every principal hook has even length.  We let $H_d(m)$ count the members of $\mathcal H_{\mathrm{even}}(2m)$ having Durfee size $d$.

\begin{theorem}[Even principal-hook refinement]\label{thm:even-principal-hooks}
For $d\geq1$,
\begin{equation}\label{eq:Hd-gf}
 \sum_{m\geq0}H_d(m)q^m
 =\frac{2q^{\binom{d+1}{2}}(-q;q)_{d-1}}{(q;q)_d^2}.
\end{equation}
For $d=0$ the generating function is $1$.

Moreover, the right-hand side of \eqref{eq:Hd-gf} is the generating function for overpartitions whose Frobenius representation has $d$ columns.  Thus the equinumerosity between even-principal-hook partitions and overpartitions is refined by the Durfee-size/Frobenius-column statistic.
\end{theorem}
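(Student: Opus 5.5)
The plan is to work with Frobenius symbols. A partition of Durfee size $d$ has Frobenius symbol $\binom{a_1\cdots a_d}{b_1\cdots b_d}$ with $a_1>\cdots>a_d\ge0$ and $b_1>\cdots>b_d\ge0$. The $i$-th principal hook has length $a_i+b_i+1$, and $|\lambda|=d+\sum(a_i+b_i)$. The even-hook condition says exactly that $a_i\not\equiv b_i\pmod 2$ for every $i$.

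\textbf{Generating function for $H_d$.} Write $h_i=a_i+b_i+1=2c_i$, so the total size is $2\sum c_i$. Hence $\sum_m H_d(m)q^m$ equals the sum over admissible pairs $(a,b)$ of $q^{\sum c_i}$. I would parametrize by the two parity patterns. Let $\epsilon_i\in\{0,1\}$ record the parity of $a_i$, so $b_i$ has parity $1-\epsilon_i$. Write $a_i=2\alpha_i+\epsilon_i$ and $b_i=2\beta_i+1-\epsilon_i$, which gives $c_i=\alpha_i+\beta_i+1$. Strict decrease of $a$ becomes a condition on $(\alpha_i,\epsilon_i)$: the pairs $(\alpha_i,\epsilon_i)$ must decrease strictly lexicographically in the order $(\alpha,1)>(\alpha,0)$. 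The analogous condition holds for $b$, but with the complementary parities, which couples the two rows.

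Rather than fight this coupling directly, I would first try a transfer-matrix (or recursive) computation on the largest hook. Conditioning on $d$, the constraints amount to a pair of strict chains in $\mathbb Z_{\ge0}$ whose parities are complementary in each column. The goal is to show the resulting sum is
\[
\frac{2q^{\binom{d+1}{2}}(-q;q)_{d-1}}{(q;q)_d^2}.
\]
The shape of this target suggests the decomposition to aim for. The factor $2$ comes from the free parity $\epsilon_d$ of the smallest column, since its choice is unconstrained by anything below. The factor $(-q;q)_{d-1}$ comes from the $d-1$ parity "switches" between consecutive columns, each either costing a shift $q^{i}$ or not. The two $1/(q;q)_d$ factors come from the gaps of $\alpha$ and $\beta$.

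Concretely, I would set gaps $g_i=a_i-a_{i+1}$ and $g'_i=b_i-b_{i+1}$. Consecutive columns both satisfy $a\not\equiv b$, so $g_i\equiv g'_i\pmod 2$. Writing $g_i=2\gamma_i+\eta_i$ and $g'_i=2\gamma'_i+\eta_i$ with a common parity bit $\eta_i$, the positivity requirement $g_i,g'_i\ge1$ forces $\gamma_i\ge1$ when $\eta_i=0$. Column $i$ counts these gaps $i$ times in the weight $\sum c_i$. So each gap pair contributes
\[
\frac{q^{i}+q^{i/2\cdot\ldots}}{(1-q^i)^2},
\]
and the main obstacle is bookkeeping the half-integer contributions. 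I expect the $\eta_i=1$ and $\eta_i=0$ cases to contribute $q^{i}(1+q^{i})$-type numerators over $(1-q^i)^2$, up to a global shift. If that works out, the product over $i<d$, times the bottom column $(a_d,b_d)$ contributing $2q/(1-q)^2$-type terms, yields the claimed formula; the $\binom{d+1}{2}$ collects the minimal shifts. If the direct gap accounting becomes messy, the fallback is a recursion in $d$ obtained by removing the largest hook and checking that the ratio of successive right-hand sides, $q^{d}(1+q^{d-1})/(1-q^d)^2$, matches the hook-removal factor.

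\textbf{Overpartition interpretation.} I would use the Frobenius representation of overpartitions due to Lovejoy: two-rowed arrays whose top row is a partition into $d$ distinct nonnegative parts, and whose bottom row is an overpartition into $d$ parts (with the appropriate shift conventions). The top row contributes $q^{\binom d2}/(q;q)_d$. The bottom row, being an overpartition into exactly $d$ positive parts, contributes $q^{d}\cdot 2(-q;q)_{d-1}/(q;q)_d$: the largest part's first occurrence can be freely overlined, giving the factor $2$, while the remaining overlinable distinct values give $(-q;q)_{d-1}$ after conjugation. Multiplying and adding $d$ for the columns gives exactly $q^{\binom{d+1}2}\cdot 2(-q;q)_{d-1}/(q;q)_d^2$. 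Summing over $d$ then recovers $\overline p(m)$, consistent with Corollary~\ref{cor:overpartition-pairing-quotient}, which serves as a sanity check. I expect the hook-parity gap computation in the first part to be the hardest step.
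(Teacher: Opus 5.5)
Your gap-coordinate route is correct, and it is a modest reorganization of the paper's argument. The paper fixes the parity word $\varepsilon$ and observes that $\alpha$ (resp.\ $\beta$) must descend strictly except across a $1\to0$ (resp.\ $0\to1$) transition. It strips the forced descents to obtain two unrestricted partitions with at most $d$ parts. Only then does it sum over the $2^d$ words to get $2(-q;q)_{d-1}$.

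You never fix $\varepsilon$. Since $g_i=2(\alpha_i-\alpha_{i+1})+\varepsilon_i-\varepsilon_{i+1}$, your common parity bit $\eta_i$ is $1$ exactly when $\varepsilon_i\neq\varepsilon_{i+1}$. Your $q^{i}$ versus $q^{2i}$ dichotomy is therefore the paper's extra factor $q^{\sum_{\varepsilon_i=\varepsilon_{i+1}}i}$, encoded locally. As a result your generating function factors gap by gap, with no sum over words. The paper's version instead exhibits two free partitions, which is closer to a bijective statement. Your decomposition is bijective because adding gaps of equal parity to a bottom column $(a_d,b_d)$ of opposite parities keeps every $a_i+b_i$ odd.

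As written, however, your sketch has bookkeeping slips that would give the wrong formula if taken literally.

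\begin{itemize}
\item There are no half-integers. Gap pair $i$ contributes $i(g_i+g_i')/2=i(\gamma_i+\gamma_i'+\eta_i)$. So $\eta_i=1$ gives $q^i/(1-q^i)^2$, and $\eta_i=0$ (with $\gamma_i,\gamma_i'\ge1$) gives $q^{2i}/(1-q^i)^2$.
\item The bottom column does not contribute $2q/(1-q)^2$. The entries $a_d$ and $b_d$ are counted in all $d$ columns, and for $a_d+b_d+1=2c$ there are $2c$ admissible pairs. Its contribution is therefore $\sum_{c\ge1}2c\,q^{dc}=2q^d/(1-q^d)^2$.
\end{itemize}
With these corrections,
\[
 \frac{2q^d}{(1-q^d)^2}\prod_{i=1}^{d-1}\frac{q^i(1+q^i)}{(1-q^i)^2}
 =\frac{2q^{\binom{d+1}{2}}(-q;q)_{d-1}}{(q;q)_d^2},
\]
and no recursion fallback is needed.

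On the overpartition side you count $q^d$ twice. If the bottom row is an overpartition into $d$ positive parts, with generating function $q^d(-1;q)_d/(q;q)_d$, you must not add another $d$ for the columns. In Lovejoy's convention, which the paper uses, the bottom row has $d$ nonnegative parts and generating function $(-1;q)_d/(q;q)_d=2(-q;q)_{d-1}/(q;q)_d$. The single factor $q^d$ then comes from the column count.
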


\begin{proof}
We let $\lambda$ have Durfee size $d$ and Frobenius symbol
\[
 \begin{pmatrix}
  a_1&a_2&\cdots&a_d\\
  b_1&b_2&\cdots&b_d
 \end{pmatrix},
 \qquad
 a_1>\cdots>a_d\geq0,
 \quad
 b_1>\cdots>b_d\geq0.
\]
The $i$th principal hook has length $a_i+b_i+1$.  Thus all principal hooks are even if and only if $a_i$ and $b_i$ have opposite parity for every $i$.  We write
\begin{equation}\label{eq:frob-parity-split}
 a_i=2\alpha_i+\varepsilon_i,
 \qquad
 b_i=2\beta_i+1-\varepsilon_i,
 \qquad
 \varepsilon_i\in\{0,1\}.
\end{equation}
Then
\begin{equation}\label{eq:half-size-frob}
 \frac{|\lambda|}{2}
 =d+\sum_{i=1}^d(\alpha_i+\beta_i).
\end{equation}

We fix the binary word $\varepsilon=(\varepsilon_1,\ldots,\varepsilon_d)$.  Since the top row of the Frobenius symbol is strictly decreasing, the sequence $\alpha_1,\ldots,\alpha_d$ is weakly decreasing, and equality between consecutive terms is allowed only across a transition $1\to0$ in $\varepsilon$.  Similarly, the sequence $\beta_1,\ldots,\beta_d$ is weakly decreasing, and equality is allowed only across a transition $0\to1$.

We put
\[
 D_\alpha=\{i:1\leq i<d,\ (\varepsilon_i,\varepsilon_{i+1})\neq(1,0)\},
 \qquad
 D_\beta=\{i:1\leq i<d,\ (\varepsilon_i,\varepsilon_{i+1})\neq(0,1)\}.
\]
Removing the forced descents at these positions leaves two arbitrary partitions with at most $d$ parts.  Their combined generating function is $(q;q)_d^{-2}$, while the forced descents contribute
\begin{align*}
 d+\sum_{i\in D_\alpha}i+\sum_{i\in D_\beta}i
 &=\binom{d+1}{2}
   +\sum_{\substack{1\leq i<d\\ \varepsilon_i=\varepsilon_{i+1}}}i.
\end{align*}
Hence, for fixed $\varepsilon$,
\[
 \sum q^{|\lambda|/2}
 =\frac{q^{\binom{d+1}{2}}}{(q;q)_d^2}
  q^{\sum_{\varepsilon_i=\varepsilon_{i+1}}i}.
\]
The first bit of $\varepsilon$ is arbitrary; thereafter, at each position $i$, the next bit may either agree with or differ from its predecessor.  Therefore
\begin{align*}
 \sum_{\varepsilon\in\{0,1\}^d}
 q^{\sum_{\varepsilon_i=\varepsilon_{i+1}}i}
 &=2\prod_{i=1}^{d-1}(1+q^i)\\
 &=2(-q;q)_{d-1},
\end{align*}
which proves \eqref{eq:Hd-gf}.

The same expression arises on the overpartition side.  We use the standard Frobenius representation of an overpartition~\cite{Lovejoy2005}: with $d$ columns, it consists of a top row of $d$ distinct nonnegative integers and a bottom row that is an overpartition into $d$ nonnegative parts; the weight is $d$ plus the sum of all entries.  The top row has generating function
\[
 \frac{q^{\binom d2}}{(q;q)_d},
\]
while the bottom row has generating function
\[
 \frac{(-1;q)_d}{(q;q)_d}
 =\frac{2(-q;q)_{d-1}}{(q;q)_d}.
\]
Including the contribution $q^d$ of the $d$ columns gives
\[
 q^d\frac{q^{\binom d2}}{(q;q)_d}
 \frac{2(-q;q)_{d-1}}{(q;q)_d}
 =\frac{2q^{\binom{d+1}{2}}(-q;q)_{d-1}}{(q;q)_d^2},
\]
exactly the series in \eqref{eq:Hd-gf}.  
\end{proof}

Summing over the Durfee size gives the following geometric realization of overpartitions within the pairing framework. 

\begin{corollary}\label{cor:even-hooks-overpartitions}

For every nonnegative integer $m$, the number of partitions of $2m$ whose principal hooks are all even equals the number of overpartitions of $m$.

\end{corollary}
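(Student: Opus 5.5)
The plan is to sum the refined identity of Theorem~\ref{thm:even-principal-hooks} over all Durfee sizes $d\geq0$. A partition of $2m$ with all principal hooks even has some Durfee size $d$, so the left side of the corollary is $\sum_{d\geq0}H_d(m)$. On the overpartition side, every overpartition of $m$ has a Frobenius representation with a well-defined number $d\geq0$ of columns, and this representation is unique. By Theorem~\ref{thm:even-principal-hooks}, for each fixed $d$ the two generating functions coincide: for $d\geq1$ both equal $2q^{\binom{d+1}{2}}(-q;q)_{d-1}/(q;q)_d^2$, and for $d=0$ both equal $1$, where the $d=0$ terms are the empty partition and the empty overpartition. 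Summing over $d$ then gives
\[
 \sum_{m\geq0}\#\mathcal H_{\mathrm{even}}(2m)\,q^m
 =\sum_{d\geq0}\sum_{m\geq0}H_d(m)q^m
 =\sum_{m\geq0}\overline p(m)q^m,
\]
and comparing coefficients of $q^m$ proves the corollary.

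The only point needing care is the claim that the Frobenius representation of overpartitions (Lovejoy) is a bijection onto all overpartitions, graded by $m$. That is, every overpartition of $m$ corresponds to exactly one pair (top row of $d$ distinct nonnegative integers, bottom row an overpartition into $d$ nonnegative parts) of total weight $m$. This is the standard content of the cited representation, so I would invoke it directly rather than reprove it.

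As an independent check that avoids the Frobenius bookkeeping, I would verify the identity
\[
 \sum_{d\geq0}\frac{2q^{\binom{d+1}{2}}(-q;q)_{d-1}}{(q;q)_d^2}=\frac{(-q;q)_\infty}{(q;q)_\infty},
\]
with the $d=0$ term read as $1$. Write $2(-q;q)_{d-1}=(-1;q)_d$. The sum becomes $\sum_{d}(-1;q)_d\,q^{\binom{d+1}{2}}/(q;q)_d^2$, which is a limiting case of the $q$-Gauss (or $q$-Chu--Vandermonde) summation with $a=-1$, $b\to\infty$, $c=q$, and it evaluates to $(-q;q)_\infty/(q;q)_\infty$. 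The result is the overpartition generating function, which also matches Corollary~\ref{cor:overpartition-pairing-quotient}. The main (mild) obstacle is therefore only confirming the $d=0$ convention and that this $q$-series summation applies. If needed, I would get it from Cauchy's identity $\sum_d (a;q)_d (-1)^d q^{\binom{d}{2}} (c/a)^d/((q;q)_d(c;q)_d)=(c/a;q)_\infty/(c;q)_\infty$ with $c=q$, $a=-1$.
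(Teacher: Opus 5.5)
Your proposal is correct and is essentially the paper's proof: you sum the Durfee-size-refined identity of Theorem~\ref{thm:even-principal-hooks} over $d\geq0$, using that Lovejoy's Frobenius representation lists each overpartition exactly once. Your supplementary check is also valid. Cauchy's identity with $a=-1$, $c=q$ gives $\sum_{d\geq0}(-1;q)_d\,q^{\binom{d+1}{2}}/(q;q)_d^2=(-q;q)_\infty/(q;q)_\infty$, which identifies the summed series with the overpartition generating function directly and so makes the corollary independent of the cited bijection.
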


\begin{proof}
By Theorem~\ref{thm:even-principal-hooks}, summing over $d$ counts all overpartition Frobenius representations and hence all overpartitions.
\end{proof}

Combining this with Corollary~\ref{cor:overpartition-pairing-quotient} gives the convolution announced in the introduction.

\begin{corollary}\label{cor:even-hooks-convolution}
For every $n\geq0$,
\[
 p(n)=\sum_{j=0}^n(-1)^j\operatorname{sc}(j)\,
 \#\mathcal H_{\mathrm{even}}\bigl(2(n-j)\bigr).
\]
\end{corollary}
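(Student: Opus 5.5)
The convolution will follow by clearing the denominator in Corollary~\ref{cor:overpartition-pairing-quotient} and then reading off coefficients, with the overpartition coefficients replaced by their even-principal-hook counterparts via Corollary~\ref{cor:even-hooks-overpartitions}.

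\emph{Step 1.} Write the quotient identity \eqref{eq:overpartition-pairing-quotient} multiplicatively:
\[
 \sum_{n\geq0}p(n)q^n
 =\Bigl(\sum_{j\geq0}(-1)^j\operatorname{sc}(j)q^j\Bigr)
  \Bigl(\sum_{m\geq0}\overline p(m)q^m\Bigr).
\]
The denominator series has constant term $1$, so it is invertible in $\mathbb{Z}[[q]]$. Clearing it is therefore legitimate as an identity of formal power series.

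\emph{Step 2.} Compare coefficients of $q^n$ on both sides. This gives
\[
 p(n)=\sum_{j=0}^n(-1)^j\operatorname{sc}(j)\,\overline p(n-j).
\]

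\emph{Step 3.} Apply Corollary~\ref{cor:even-hooks-overpartitions} with $m=n-j$. It says that $\overline p(n-j)$ equals the number of partitions of $2(n-j)$ whose principal hooks are all even, that is, $\#\mathcal H_{\mathrm{even}}(2(n-j))$. Substituting this into the sum from Step 2 gives exactly the stated formula.

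There is no real obstacle here. The only point needing care is justifying the inversion of the formal power series, which is immediate from the unit constant term.
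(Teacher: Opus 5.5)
Your proposal is correct and follows essentially the same route as the paper: you multiply the quotient identity \eqref{eq:overpartition-pairing-quotient} through by $\sum_{j\geq0}(-1)^j\operatorname{sc}(j)q^j$, compare coefficients of $q^n$, and replace $\overline p(n-j)$ by $\#\mathcal H_{\mathrm{even}}(2(n-j))$ using Corollary~\ref{cor:even-hooks-overpartitions}. Your extra remark on the unit constant term is harmless but not really needed, since multiplying an identity of the form $A=B/C$ by $C$ requires no further justification.
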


\begin{proof}
We multiply both sides of \eqref{eq:overpartition-pairing-quotient} by
$\sum_{j\geq0}(-1)^j\operatorname{sc}(j)q^j$ and use
Corollary~\ref{cor:even-hooks-overpartitions}.
\end{proof}

\section{Simply paired partitions of negative pairing rank}

\begin{definition}
A partition is \emph{simply paired} if no part occurs more than three times.  For such a partition $\lambda$, let $b(\lambda)$ denote the number of distinct parts that participate in a pair.
\end{definition}

For a simply paired partition, we let $A$ be the set of parts that participate in a pair and $U$ the set of unpaired parts.  These sets need not be disjoint: a part in $A\cap U$ occurs three times.  Conversely, every ordered pair of finite sets $(A,U)$ of positive integers determines a unique simply paired partition, with multiplicity
\[
 2\mathbf{1}_{j\in A}+\mathbf{1}_{j\in U}
\]
for the part $j$.  Under this correspondence,
\[
 |\lambda|=2\sum_{a\in A}a+\sum_{u\in U}u,
 \qquad
 r(\lambda)=\max A,
 \qquad
 b(\lambda)=|A|,
\]
where $\max\varnothing=0$.  The statistic $s(\lambda)$ is the alternating sum of the elements of $U$ in decreasing order.

We use the following consequence of Lemma~\ref{lem:finite}:
\begin{equation}\label{eq:distinct-alt-sum}
 \sum_{\substack{U\subset\mathbb Z_{>0}\text{ finite}\\s(U)=t}}
 q^{\sum_{u\in U}u}
 =\frac{q^t}{(q^2;q^2)_t}
 \qquad(t\geq0).
\end{equation}
Indeed, this is the coefficient of $z^t$ in the $m=0$ part of the limiting form of Lemma~\ref{lem:finite}, or equivalently in
\[
 \sum_U z^{s(U)}q^{\sum_{u\in U}u}
 =\frac{1}{(zq;q^2)_\infty}.
\]

\begin{theorem}[Odd-divisor theorem]\label{thm:odd-divisor}
Among the simply paired partitions of $n$ having negative pairing rank, let $E_{\mathrm{sp}}(n)$ count those with $b(\lambda)$ even and let $O_{\mathrm{sp}}(n)$ count those with $b(\lambda)$ odd.  Then
\[
 E_{\mathrm{sp}}(n)-O_{\mathrm{sp}}(n)=d_{\mathrm{odd}}(n),
\]
where $d_{\mathrm{odd}}(n)$ is the number of odd positive divisors of $n$.
\end{theorem}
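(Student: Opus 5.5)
The plan is to compute the signed generating function
\[
 G(q)=\sum_{\substack{\lambda\ \text{simply paired}\\ \rho(\lambda)<0}}(-1)^{b(\lambda)}q^{|\lambda|}
\]
using the correspondence $\lambda\leftrightarrow(A,U)$ described above. I will then show that $G(q)=\sum_{n\ge1}d_{\mathrm{odd}}(n)q^n$. Under this correspondence the condition $\rho(\lambda)<0$ reads $\max A<s(U)$, with the convention $\max\varnothing=0$. The weight factors as $(-1)^{|A|}q^{2\sum_{a\in A}a}\cdot q^{\sum_{u\in U}u}$.

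First I condition on $t=s(U)$. Negative rank forces $t\ge1$, since $t=0$ would require $\max A<0$, which is impossible. For fixed $t$, the sets $A$ range freely over subsets of $\{1,\dots,t-1\}$, and they do so independently of $U$. Hence the $A$-sum is
\[
 \sum_{A\subseteq\{1,\dots,t-1\}}(-1)^{|A|}q^{2\sum_{a\in A}a}=\prod_{a=1}^{t-1}(1-q^{2a})=(q^2;q^2)_{t-1}.
\]
The $U$-sum with $s(U)=t$ is given by \eqref{eq:distinct-alt-sum} as $q^t/(q^2;q^2)_t$.

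Multiplying the two sums and summing over $t$ gives
\[
 G(q)=\sum_{t\ge1}\frac{q^t(q^2;q^2)_{t-1}}{(q^2;q^2)_t}=\sum_{t\ge1}\frac{q^t}{1-q^{2t}}.
\]
Expanding each term as a geometric series yields
\[
 G(q)=\sum_{t\ge1}\sum_{k\ge0}q^{t(2k+1)}.
\]
The coefficient of $q^n$ therefore counts the factorizations $n=t\cdot(2k+1)$, which is exactly the number of odd divisors of $n$. Comparing coefficients gives $E_{\mathrm{sp}}(n)-O_{\mathrm{sp}}(n)=d_{\mathrm{odd}}(n)$. For $n=0$ the empty partition has rank $0$, so both sides vanish, consistent with the convention $d_{\mathrm{odd}}(0)=0$.

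The only point needing care is the independence of $A$ and $U$ once $t$ is fixed. This holds because the constraint couples them only through $\max A<t$, and because any pair $(A,U)$ of finite sets gives a valid simply paired partition, including the case where $A$ and $U$ overlap. I expect no real obstacle beyond verifying this; the telescoping of $(q^2;q^2)_{t-1}/(q^2;q^2)_t$ does the rest.
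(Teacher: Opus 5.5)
Your proposal is correct and matches the paper's proof step for step. You condition on $t=s(U)\geq1$ and let $A$ range over subsets of $\{1,\ldots,t-1\}$ to obtain $(q^2;q^2)_{t-1}$, combine this with \eqref{eq:distinct-alt-sum}, and cancel down to $\sum_{t\geq1}q^t/(1-q^{2t})$, whose coefficients count the odd divisors.
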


\begin{proof}
We fix the unpaired set $U$ and put $t=s(U)$.  The pairing rank is negative exactly when
\[
 r(\lambda)=\max A<t.
\]
Thus $t\geq1$ and $A$ may be any subset of $\{1,2,\ldots,t-1\}$.  Weighting such a set by $(-1)^{|A|}q^{2\sum_{a\in A}a}$ gives
\[
 \sum_{A\subseteq\{1,\ldots,t-1\}}
 (-1)^{|A|}q^{2\sum_{a\in A}a}
 =\prod_{j=1}^{t-1}(1-q^{2j})
 =(q^2;q^2)_{t-1}.
\]
Combining this with \eqref{eq:distinct-alt-sum}, we obtain
\begin{align*}
 \sum_{n\geq1}\bigl(E_{\mathrm{sp}}(n)-O_{\mathrm{sp}}(n)\bigr)q^n
 &=\sum_{t\geq1}
 \frac{q^t}{(q^2;q^2)_t}(q^2;q^2)_{t-1}\\
 &=\sum_{t\geq1}\frac{q^t}{1-q^{2t}}\\
 &=\sum_{t\geq1}\sum_{j\geq0}q^{(2j+1)t}.
\end{align*}
The coefficient of $q^n$ in the last series counts factorizations
\[
 n=(2j+1)t.
\]
These are in bijection with the odd divisors $2j+1$ of $n$.  Thus that coefficient is $d_{\mathrm{odd}}(n)$, proving the theorem.
\end{proof}

We let $\overline p_{\mathrm{odd}}(n)$ denote the number of overpartitions of $n$ into odd parts.

\begin{theorem}[Odd-overpartition theorem]\label{thm:odd-overpartition}
Let $M(n)$ be the number of simply paired partitions of $n$ having negative pairing rank.  Then, for every $n\geq1$,
\[
 M(n)=\frac12\,\overline p_{\mathrm{odd}}(n).
\]
\end{theorem}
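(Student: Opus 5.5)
The plan mirrors the proof of Theorem~\ref{thm:odd-divisor}, with the sign $(-1)^{|A|}$ replaced by $+1$. We fix the unpaired set $U$ and put $t=s(U)$. Negative pairing rank means $\max A<t$, so $t\ge1$ and $A$ ranges over all subsets of $\{1,\ldots,t-1\}$. Weighting by $q^{2\sum_{a\in A}a}$ gives $(-q^2;q^2)_{t-1}$. Combining this with \eqref{eq:distinct-alt-sum} yields
\[
 \sum_{n\ge1}M(n)q^n=\sum_{t\ge1}\frac{q^t(-q^2;q^2)_{t-1}}{(q^2;q^2)_t}.
\]

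The target is the series
\[
 \tfrac12\Bigl(\frac{(-q;q^2)_\infty}{(q;q^2)_\infty}-1\Bigr),
\]
since overpartitions into odd parts have generating function $(-q;q^2)_\infty/(q;q^2)_\infty$ and the statement concerns $n\ge1$. The key step is therefore the $q$-series identity
\[
 1+2\sum_{t\ge1}\frac{q^t(-q^2;q^2)_{t-1}}{(q^2;q^2)_t}=\frac{(-q;q^2)_\infty}{(q;q^2)_\infty}.
\]
To prove it, I write $2(-q^2;q^2)_{t-1}=(-1;q^2)_t$, so that the left side becomes
\[
 \sum_{t\ge0}\frac{(-1;q^2)_t}{(q^2;q^2)_t}\,q^t.
\]
The $q$-binomial theorem with base $Q=q^2$, numerator parameter $a=-1$ and argument $z=q$ then evaluates this as $(-q;q^2)_\infty/(q;q^2)_\infty$, which is exactly the target.

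The main thing to watch is the bookkeeping at $t=0$. The empty-$U$ term is absent from $M$, since $t=0$ cannot have negative rank. It corresponds to the constant $1$ in the $q$-binomial sum, and that constant is exactly what the subtraction of $1$ and the halving remove. The nontrivial content rests entirely on recognizing that $2(-q^2;q^2)_{t-1}=(-1;q^2)_t$ turns the sum into a $q$-binomial series. Once that is done, comparing coefficients of $q^n$ for $n\ge1$ finishes the proof. It also shows, without further work, that $\overline p_{\mathrm{odd}}(n)$ is even for $n\ge1$.
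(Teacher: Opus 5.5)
Your proposal is correct and follows the paper's proof essentially step for step: the same reduction to arbitrary subsets $A\subseteq\{1,\ldots,t-1\}$ combined with \eqref{eq:distinct-alt-sum}, the same identity $(-1;q^2)_t=2(-q^2;q^2)_{t-1}$ for $t\geq1$, and the same application of the $q$-binomial theorem with $Q=q^2$, $a=-1$, $z=q$. The only addition is your explicit check of the $t=0$ term, where only $U=\varnothing$ has $s(U)=0$; the paper leaves this implicit.
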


\begin{proof}
As in the preceding proof, after fixing $t=s(U)\geq1$, the set $A$ may be any subset of $\{1,2,\ldots,t-1\}$.  If we omit the sign $(-1)^{|A|}$, its generating function is
\[
 \sum_{A\subseteq\{1,\ldots,t-1\}}
 q^{2\sum_{a\in A}a}
 =\prod_{j=1}^{t-1}(1+q^{2j})
 =(-q^2;q^2)_{t-1}.
\]
It follows from \eqref{eq:distinct-alt-sum} that
\begin{equation}\label{eq:M-series}
 \sum_{n\geq1}M(n)q^n
 =\sum_{t\geq1}
 \frac{q^t(-q^2;q^2)_{t-1}}{(q^2;q^2)_t}.
\end{equation}

We apply the $q$-binomial theorem~\cite[Chapter~2]{AndrewsPartitions} in the form
\[
 \frac{(az;Q)_\infty}{(z;Q)_\infty}
 =\sum_{t\geq0}\frac{(a;Q)_t}{(Q;Q)_t}z^t.
\]
Taking $Q=q^2$, $a=-1$, and $z=q$ gives
\[
 \frac{(-q;q^2)_\infty}{(q;q^2)_\infty}
 =\sum_{t\geq0}\frac{(-1;q^2)_t}{(q^2;q^2)_t}q^t.
\]
For $t\geq1$,
\[
 (-1;q^2)_t
 =2(-q^2;q^2)_{t-1}.
\]
Thus \eqref{eq:M-series} becomes
\begin{align*}
 \sum_{n\geq1}M(n)q^n
 &=\frac12\left(
 \frac{(-q;q^2)_\infty}{(q;q^2)_\infty}-1
 \right).
\end{align*}
Finally,
\[
 \sum_{n\geq0}\overline p_{\mathrm{odd}}(n)q^n
 =\prod_{j\geq1}\frac{1+q^{2j-1}}{1-q^{2j-1}}
 =\frac{(-q;q^2)_\infty}{(q;q^2)_\infty}.
\]
Since $\overline p_{\mathrm{odd}}(0)=1$, comparison of coefficients of $q^n$ for $n\geq1$ yields
\[
 M(n)=\frac12\,\overline p_{\mathrm{odd}}(n),
\]
as claimed.
\end{proof}

\section{A square-alternating theorem for pairing rank minus two}

For $d\geq0$, we let $A_d(n)$ denote the number of simply paired partitions of $n$ having pairing rank $-d$, and write
\[
 A_d(q)=\sum_{n\geq0}A_d(n)q^n.
\]
We set $A_d(n)=0$ when $n<0$.

To derive $A_d(q)$, we fix the largest paired part $r=\max A$.  If $r=0$, then $A$ is empty and the unpaired set has alternating sum $d$.  If $r\geq1$, then $A$ contains $r$ and may contain an arbitrary subset of $\{1,\ldots,r-1\}$.  Using \eqref{eq:distinct-alt-sum}, we therefore obtain
\begin{equation}\label{eq:Ad-series}
 A_d(q)
 =\frac{q^d}{(q^2;q^2)_d}
 +\sum_{r\geq1}
 \frac{q^{3r+d}(-q^2;q^2)_{r-1}}
 {(q^2;q^2)_{r+d}}.
\end{equation}

We first record a recurrence between consecutive rank layers.

\begin{lemma}\label{lem:Ad-recurrence}
For every $d\geq0$,
\begin{align}
 (1+q^{2d+2})A_{d+1}(q)
 ={}&(q+q^{2d})A_d(q)
 -\frac{q^{3d}}{(q^2;q^2)_d}
 +\frac{q^{3d+3}}{(q^2;q^2)_{d+1}}.
 \label{eq:Ad-recurrence}
\end{align}
\end{lemma}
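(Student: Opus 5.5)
The plan is to prove \eqref{eq:Ad-recurrence} directly from the explicit series \eqref{eq:Ad-series}, by comparing the summands termwise. Write
\[
 L_d=\frac{q^d}{(q^2;q^2)_d},\qquad
 T_d(r)=\frac{q^{3r+d}(-q^2;q^2)_{r-1}}{(q^2;q^2)_{r+d}}\quad(r\geq1),
\]
so that $A_d(q)=L_d+S_d$ with $S_d=\sum_{r\geq1}T_d(r)$. The idea is to find a three-term relation among $T_{d+1}(r)$, $T_d(r)$ and $T_d(r+1)$. Summing it over $r$ should reproduce the factors $1+q^{2d+2}$ and $q+q^{2d}$. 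The boundary term lost in the shift $r\mapsto r+1$, together with the $L$-terms, should then account for the two correction terms in \eqref{eq:Ad-recurrence}.

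The two elementary relations I expect to use are the following. First, raising $d$ by one multiplies the summand by $q$ and divides by one more denominator factor:
\[
 (1-q^{2r+2d+2})\,T_{d+1}(r)=q\,T_d(r).
\]
Second, raising $r$ by one adds the factor $q^3(1+q^{2r})$ together with the same new denominator factor, which gives
\[
 T_d(r+1)=q^2(1+q^{2r})\,T_{d+1}(r).
\]
Eliminating the awkward quantity $q^{2r}T_{d+1}(r)$ between these two identities should give
\[
 (1+q^{2d+2})\,T_{d+1}(r)=q\,T_d(r)+q^{2d}\,T_d(r+1)\qquad(r\geq1).
\]

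Summing this identity over $r\geq1$ yields
\[
 (1+q^{2d+2})S_{d+1}=(q+q^{2d})S_d-q^{2d}T_d(1),
 \qquad\text{where}\quad q^{2d}T_d(1)=\frac{q^{3d+3}}{(q^2;q^2)_{d+1}}.
\]
It then remains to verify the corresponding identity for the leading terms:
\[
 (1+q^{2d+2})L_{d+1}-(q+q^{2d})L_d
 =-\frac{q^{3d}}{(q^2;q^2)_d}+\frac{2q^{3d+3}}{(q^2;q^2)_{d+1}}.
\]
Here the factor $2$ combines the $-q^{2d}T_d(1)$ contribution just obtained with the $+q^{3d+3}/(q^2;q^2)_{d+1}$ term required by \eqref{eq:Ad-recurrence}. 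This is a finite rational identity, checked by clearing the common denominator $(q^2;q^2)_{d+1}$.

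The main obstacle is this last bookkeeping step. The correction terms in \eqref{eq:Ad-recurrence} must be matched exactly, so I would recheck the sign and the exponent of every boundary contribution, including the case $d=0$, where $(q^2;q^2)_0=1$. If the leading-term identity as written does not balance, the first thing to try is to re-derive how $T_d(1)$ and the $L$-terms combine, since an off-by-one in the power of $q$ there is the most likely source of error.
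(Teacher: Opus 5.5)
Your proposal is correct and is essentially the paper's argument. The paper writes $A_d=\sum_{r\geq0}a_rb_{r+d}$ with $a_0=1$, $a_r=q^{2r}(-q^2;q^2)_{r-1}$ and $b_t=q^t/(q^2;q^2)_t$, and uses exactly your two shift relations, in the forms $(1-q^{2t+2})b_{t+1}=qb_t$ and $q^{2r}a_r=q^{-2}a_{r+1}-a_r$ for $r\geq1$. The only difference is that the paper carries the $r=0$ term inside the sum instead of splitting off $L_d$. The leading-term identity you left unchecked does balance: after multiplying by $(q^2;q^2)_{d+1}$, both sides equal $2q^{3d+3}-q^{3d}+q^{5d+2}$ for every $d\geq0$.
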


\begin{proof}
We put $Q=q^2$ and define
\[
 a_0=1,
 \qquad
 a_r=Q^r(-Q;Q)_{r-1}\quad(r\geq1),
 \qquad
 b_t=\frac{q^t}{(Q;Q)_t}.
\]
Equation \eqref{eq:Ad-series} may then be written as
\[
 A_d(q)=\sum_{r\geq0}a_rb_{r+d}.
\]
The relation
\[
 b_{t+1}(1-Q^{t+1})=qb_t
\]
gives
\begin{equation}\label{eq:Ad-first-step}
 A_{d+1}(q)-Q^{d+1}S_d(q)=qA_d(q),
 \qquad
 S_d(q)=\sum_{r\geq0}Q^ra_rb_{r+d+1}.
\end{equation}
For $r\geq1$ we have
\[
 Q^ra_r=\frac{a_{r+1}}{Q}-a_r,
\]
while the term with $r=0$ must be kept separately.  Consequently,
\begin{align*}
 S_d(q)
 & =b_{d+1}
 +\frac1Q\sum_{r\geq1}a_{r+1}b_{r+d+1}
 -\sum_{r\geq1}a_rb_{r+d+1}\\
 & =b_{d+1}
 +\frac1Q\bigl(A_d-b_d-a_1b_{d+1}\bigr)
 -\bigl(A_{d+1}-b_{d+1}\bigr).
\end{align*}
Since $a_1=Q$, this simplifies to
\[
 S_d(q)=\frac{A_d(q)-b_d}{Q}-A_{d+1}(q)+b_{d+1}.
\]
Substitution into \eqref{eq:Ad-first-step} gives
\[
 (1+Q^{d+1})A_{d+1}(q)
 =(q+Q^d)A_d(q)-Q^db_d+Q^{d+1}b_{d+1}.
\]
Replacing $Q$ and the $b_t$ by their definitions yields \eqref{eq:Ad-recurrence}.
\end{proof}

We let
\[
 M(q)=\sum_{n\geq1}M(n)q^n,
\]
where $M(n)$ is the total number of simply paired partitions of $n$ having negative pairing rank, as in Theorem~\ref{thm:odd-overpartition}.

\begin{lemma}\label{lem:A2-M}
The generating functions $A_2(q)$ and $M(q)$ satisfy
\begin{equation}\label{eq:A2-M}
 (1+q^2)(1+q^4)A_2(q)
 =q(1-q^2)M(q)-\frac{q^3}{1+q}
 +\frac{q^6}{(1-q^2)^2}.
\end{equation}
\end{lemma}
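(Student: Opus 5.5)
The plan is to eliminate $A_0$ and $A_1$ from Lemma~\ref{lem:Ad-recurrence} and then express $A_0(q)$ in closed form through $M(q)$. Taking $d=0$ and $d=1$ in \eqref{eq:Ad-recurrence} gives
\[
 (1+q^2)A_1(q)=(1+q)A_0(q)-1+\frac{q^3}{1-q^2},
\]
\[
 (1+q^4)A_2(q)=(q+q^2)A_1(q)-\frac{q^3}{1-q^2}+\frac{q^6}{(1-q^2)(1-q^4)}.
\]
Multiply the second identity by $1+q^2$ and substitute the first. This yields
\[
 (1+q^2)(1+q^4)A_2(q)=q(1+q)^2A_0(q)+R(q),
\]
where $R(q)$ is an explicit rational function of $q$.

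The key step is to relate $A_0$ to $M$, and the $q$-binomial theorem will do this. By \eqref{eq:Ad-series} with $d=0$,
\[
 A_0(q)-1=\sum_{r\geq1}\frac{q^{3r}(-q^2;q^2)_{r-1}}{(q^2;q^2)_r}.
\]
Using $(-1;q^2)_r=2(-q^2;q^2)_{r-1}$ for $r\geq1$, exactly as in the proof of Theorem~\ref{thm:odd-overpartition}, but now with $z=q^3$ in place of $z=q$, I expect
\[
 A_0(q)-1=\frac12\left(\frac{(-q^3;q^2)_\infty}{(q^3;q^2)_\infty}-1\right).
\]
Meanwhile, the proof of Theorem~\ref{thm:odd-overpartition} gives
\[
 2M(q)+1=\frac{(-q;q^2)_\infty}{(q;q^2)_\infty}.
\]
Removing the first factor of each product gives
\[
 \frac{(-q^3;q^2)_\infty}{(q^3;q^2)_\infty}=\frac{1-q}{1+q}\bigl(2M(q)+1\bigr).
\]
Hence
\[
 A_0(q)=\frac12+\frac{1-q}{2(1+q)}\bigl(2M(q)+1\bigr).
\]

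Substituting this into $q(1+q)^2A_0(q)$ produces the term $q(1-q^2)M(q)$, which is exactly the $M$-term of \eqref{eq:A2-M}. This agreement is a good check that the elimination route is correct. It remains to verify that the leftover rational terms collapse:
\[
 \frac{q(1+q)^2}{2}+\frac{q(1-q^2)}{2}+R(q)=-\frac{q^3}{1+q}+\frac{q^6}{(1-q^2)^2}.
\]

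I expect this final rational simplification to be the only real obstacle, since it is where sign or factor errors would surface. I would clear the denominators $(1-q^2)^2(1+q^2)$ and compare numerator polynomials directly. If there is a mismatch, I would recheck the $d=1$ instance of \eqref{eq:Ad-recurrence}, in particular the factor $(q^2;q^2)_2=(1-q^2)(1-q^4)$.
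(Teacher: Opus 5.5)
Your proposal is correct and follows the paper's proof essentially step for step: you take the $d=0$ and $d=1$ cases of Lemma~\ref{lem:Ad-recurrence}, eliminate $A_1$, and then use the $q$-binomial evaluation $A_0(q)=\frac{1+(1-q)M(q)}{1+q}$. The final check you flag does go through: $R(q)=-q(1+q)-\frac{q^3}{1+q}+\frac{q^6}{(1-q^2)^2}$, and the $M$-free part of $q(1+q)^2A_0(q)$ is exactly $q(1+q)$, which cancels the polynomial term of $R(q)$.
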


\begin{proof}
Taking $d=0$ and $d=1$ in Lemma~\ref{lem:Ad-recurrence} gives
\begin{align}
 (1+q^2)A_1(q)
 &=(1+q)A_0(q)-1+\frac{q^3}{1-q^2},
 \label{eq:A1-A0}\\
 (1+q^4)A_2(q)
 &=(q+q^2)A_1(q)-\frac{q^3}{1-q^2}
 +\frac{q^6}{(1-q^2)(1-q^4)}.
 \label{eq:A2-A1}
\end{align}
Multiplying \eqref{eq:A2-A1} by $1+q^2$ and substituting \eqref{eq:A1-A0}, we find
\begin{align}
 &(1+q^2)(1+q^4)A_2(q)\notag\\
 &\quad=(q+q^2)
 \left((1+q)A_0(q)-1+\frac{q^3}{1-q^2}\right)
 -\frac{q^3(1+q^2)}{1-q^2}
 +\frac{q^6(1+q^2)}{(1-q^2)(1-q^4)}\notag\\
 &\quad=q(1+q)^2A_0(q)-q(1+q)
 -\frac{q^3}{1+q}+\frac{q^6}{(1-q^2)^2}.
 \label{eq:A2-A0}
\end{align}

To express $A_0(q)$ in terms of $M(q)$, we use \eqref{eq:Ad-series}:
\[
 A_0(q)=1+\sum_{r\geq1}
 \frac{q^{3r}(-q^2;q^2)_{r-1}}{(q^2;q^2)_r}.
\]
Since
\[
 (-1;q^2)_r=2(-q^2;q^2)_{r-1}\qquad(r\geq1),
\]
the $q$-binomial theorem gives
\begin{align*}
 A_0(q)
 &=\frac12\left(
 1+\frac{(-q^3;q^2)_\infty}{(q^3;q^2)_\infty}
 \right).
\end{align*}
On the other hand, Theorem~\ref{thm:odd-overpartition} gives
\[
 1+2M(q)=\frac{(-q;q^2)_\infty}{(q;q^2)_\infty}.
\]
Removing the factors corresponding to the part $1$ gives
\[
 \frac{(-q^3;q^2)_\infty}{(q^3;q^2)_\infty}
 =\frac{1-q}{1+q}\bigl(1+2M(q)\bigr).
\]
It follows that
\[
 A_0(q)=\frac{1+(1-q)M(q)}{1+q}.
\]
Substitution into \eqref{eq:A2-A0} proves \eqref{eq:A2-M}.
\end{proof}

For a nonnegative integer $m$, we define
\[
 \chi(m)=
 \begin{cases}
 1,&m=2^\alpha u^2\text{ for some }\alpha\geq0
       \text{ and some positive odd integer }u,\\
 0,&\text{otherwise}.
 \end{cases}
\]
Thus $\chi(0)=0$, and for $m\geq1$ the value $\chi(m)$ is $1$ precisely when the odd part of $m$ is a square.

\begin{theorem}[Square-switch theorem]\label{thm:square-switch}
For every $n\geq0$,
\begin{equation}\label{eq:square-switch}
 A_2(n+8)+A_2(n)
 \equiv n+1+\chi(n+3)+\chi(n+7)\pmod2.
\end{equation}
In particular, if $n$ is even, then $A_2(n+8)$ and $A_2(n)$ have the same parity if and only if one of $n+3$ and $n+7$ is a perfect square.
\end{theorem}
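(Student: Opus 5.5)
The plan is to reduce Lemma~\ref{lem:A2-M} modulo $2$ and compute $M(q)$ modulo $2$ using Theorem~\ref{thm:odd-overpartition}. All congruences below are coefficientwise congruences of power series with integer coefficients. First multiply \eqref{eq:A2-M} by $1-q^2$. Since
\[
(1-q^2)(1+q^2)(1+q^4)=1-q^8\equiv 1+q^8\pmod 2,
\]
the left side becomes $(1+q^8)A_2(q)$ modulo $2$. The coefficient of $q^{n+8}$ there is $A_2(n+8)+A_2(n)$, using $A_2(m)=0$ for $m<0$. On the right side we get
\[
q(1-q^2)^2M(q)-\frac{q^3(1-q^2)}{1+q}+\frac{q^6}{1-q^2}
\equiv q(1+q^4)M(q)+q^3(1+q)+\frac{q^6}{1-q^2}\pmod 2 .
\]
Here we use $(1-q^2)^2\equiv 1+q^4$ and $(1-q^2)/(1+q)=1-q$.

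Next I determine $M(n)\bmod 2$. By Theorem~\ref{thm:odd-overpartition},
\[
1+2M(q)=\prod_{j\ge1}\frac{1+q^{2j-1}}{1-q^{2j-1}}
=\prod_{j\ge1}\Bigl(1+\frac{2q^{2j-1}}{1-q^{2j-1}}\Bigr).
\]
Expanding this product modulo $4$, every cross term carries a factor $4$. Hence
\[
1+2M(q)\equiv 1+2\sum_{j\ge1}\frac{q^{2j-1}}{1-q^{2j-1}}\pmod 4,
\]
so $M(q)\equiv\sum_{n\ge1}d_{\mathrm{odd}}(n)q^n\pmod 2$. This is also consistent with Theorem~\ref{thm:odd-divisor}. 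The divisors of an odd number $u$ pair off as $e\leftrightarrow u/e$, so $d_{\mathrm{odd}}(n)$ is odd exactly when the odd part of $n$ is a square. Thus $M(n)\equiv\chi(n)\pmod 2$ for $n\ge1$, and this also holds at $n=0$ since $M(0)=0=\chi(0)$.

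Now extract the coefficient of $q^{n+8}$ with $n\ge0$ from the reduced right side.
\begin{itemize}
\item The term $q(1+q^4)M(q)$ contributes $M(n+7)+M(n+3)\equiv\chi(n+7)+\chi(n+3)$.
\item The polynomial $q^3(1+q)$ has degree $4<8$, so it contributes nothing.
\item The term $q^6/(1-q^2)=\sum_{k\ge0}q^{6+2k}$ contributes $1$ exactly when $n$ is even, which is $\equiv n+1\pmod 2$.
\end{itemize}
Together these give \eqref{eq:square-switch}.

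For the ``in particular'' statement, take $n$ even. Then $n+3$ and $n+7$ are odd, so $\chi$ of each is $1$ precisely when that number is a perfect square. These two odd squares would differ by $4$, which forces them to be $1$ and $5$, impossible. So at most one of $n+3$ and $n+7$ is a square. The right side of \eqref{eq:square-switch} is then $1+[\text{one of them is a square}]$, which is even exactly when one of them is a square.

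The main obstacle is the mod-$4$ evaluation of $\overline p_{\mathrm{odd}}$, since we need $M=(\overline p_{\mathrm{odd}}-1)/2$ modulo $2$. I expect the factorization $\frac{1+x}{1-x}=1+\frac{2x}{1-x}$ to handle it cleanly. The remaining work is careful bookkeeping of the small rational terms, in particular checking that $q^3(1+q)$ cannot reach degree $8$.
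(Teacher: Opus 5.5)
Your proof is correct and follows the paper's route: the mod-$4$ expansion of $\prod_j\bigl(1+\tfrac{2q^{2j-1}}{1-q^{2j-1}}\bigr)$ gives $M(m)\equiv\chi(m)\pmod 2$, and this is fed into Lemma~\ref{lem:A2-M}. Your multiplication by $1-q^2$ before reducing is the same step as the paper's combination $\Delta_2(n+8)+\Delta_2(n+6)$, i.e.\ multiplication by $1+q^2$ modulo $2$; doing it first collapses the rational terms to $q^3(1-q)$ and $q^6/(1-q^2)$, which spares you the $\eta$ bookkeeping. One small fix in the ``in particular'' step: two odd squares cannot differ by $4$ because every odd square is $\equiv 1\pmod 8$ (the only squares differing by $4$ are $0$ and $4$), not because they would have to be $1$ and $5$.
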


\begin{proof}
We first determine the parity of $M(n)$.  By Theorem~\ref{thm:odd-overpartition},
\begin{align*}
 1+2M(q)
 &=\prod_{j\geq1}
 \frac{1+q^{2j-1}}{1-q^{2j-1}}\\
 &=\prod_{j\geq1}
 \left(1+\frac{2q^{2j-1}}{1-q^{2j-1}}\right).
\end{align*}
Modulo $4$, every product containing at least two nonconstant factors vanishes.  Hence
\[
 1+2M(q)
 \equiv1+2\sum_{j\geq1}
 \frac{q^{2j-1}}{1-q^{2j-1}}\pmod4.
\]
Subtracting $1$ and dividing coefficientwise by $2$ gives
\begin{equation}\label{eq:M-parity}
 M(q)\equiv\sum_{j\geq1}
 \frac{q^{2j-1}}{1-q^{2j-1}}\pmod2.
\end{equation}
The coefficient of $q^m$ on the right of \eqref{eq:M-parity} is the number $d_{\mathrm{odd}}(m)$ of odd divisors of $m$.  If $m=2^\alpha v$ with $v$ odd, then
\[
 d_{\mathrm{odd}}(m)=d(v),
\]
which is odd precisely when $v$ is a square.  Therefore
\begin{equation}\label{eq:M-chi}
 M(m)\equiv\chi(m)\pmod2
 \qquad(m\geq0),
\end{equation}
where $M(0)=0$.

We define
\[
 \Delta_2(m)=A_2(m)+A_2(m-2)+A_2(m-4)+A_2(m-6).
\]
Modulo $2$, the two elementary series in \eqref{eq:A2-M} satisfy
\[
 -\frac{q^3}{1+q}+\frac{q^6}{(1-q^2)^2}
 \equiv\sum_{m\geq3}q^m+\sum_{j\geq0}q^{4j+6}\pmod2.
\]
We let $\eta(m)$ denote the coefficient of $q^m$ in the series on the right.  For $m\geq6$,
\begin{equation}\label{eq:eta}
 \eta(m)=
 \begin{cases}
 0,&m\equiv2\pmod4,\\
 1,&m\not\equiv2\pmod4.
 \end{cases}
\end{equation}
Taking coefficients in \eqref{eq:A2-M} and using \eqref{eq:M-chi}, we obtain
\begin{equation}\label{eq:Delta-parity}
 \Delta_2(m)
 \equiv\chi(m-1)+\chi(m-3)+\eta(m)\pmod2.
\end{equation}

The intermediate terms cancel in pairs modulo $2$, so
\begin{align*}
 A_2(n+8)+A_2(n)
 &\equiv\Delta_2(n+8)+\Delta_2(n+6)\\
 &\equiv\chi(n+7)+\chi(n+3)
   +\eta(n+8)+\eta(n+6)\pmod2.
\end{align*}
If $n$ is even, exactly one of $n+6$ and $n+8$ is congruent to $2$ modulo $4$, and \eqref{eq:eta} gives
\[
 \eta(n+8)+\eta(n+6)\equiv1\pmod2.
\]
If $n$ is odd, both $n+6$ and $n+8$ are odd, so both corresponding values of $\eta$ are $1$ and their sum is $0$ modulo $2$.  Thus, in all cases,
\[
 \eta(n+8)+\eta(n+6)\equiv n+1\pmod2.
\]
This proves \eqref{eq:square-switch}.

When $n$ is even, the integers $n+3$ and $n+7$ are odd, so $\chi$ is their ordinary perfect-square indicator.  Moreover, according to the residue of $n$ modulo $8$, at most one of these integers can be congruent to $1$ modulo $8$; hence they cannot both be squares.  It follows from \eqref{eq:square-switch} that the two values of $A_2$ have the same parity exactly when one of $n+3$ and $n+7$ is a square.
\end{proof}

The square-switch theorem immediately yields four fixed parity progressions.

\begin{corollary}\label{cor:A2-progressions}
For every $n\geq0$,
\begin{align*}
 A_2(16n)&\equiv0\pmod2,
 &A_2(16n+4)&\equiv1\pmod2,\\
 A_2(16n+8)&\equiv1\pmod2,
 &A_2(16n+12)&\equiv0\pmod2.
\end{align*}
\end{corollary}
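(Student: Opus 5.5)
The plan is to derive all four progressions from Theorem~\ref{thm:square-switch} by induction along steps of $8$, so that only the two base values $A_2(0)$ and $A_2(4)$ have to be computed directly.

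First, apply \eqref{eq:square-switch} with $n\equiv0\pmod4$. Then $n$ is even, so $n+1$ is odd. The integers $n+3$ and $n+7$ are both odd and congruent to $3$ modulo $4$. An odd square is $\equiv1\pmod 4$, so neither is a perfect square, and $\chi(n+3)=\chi(n+7)=0$. Hence
\[
 A_2(n+8)\equiv A_2(n)+1\pmod 2\qquad(n\equiv0\pmod4).
\]
So, along each of the two chains $0,8,16,24,\ldots$ and $4,12,20,28,\ldots$, the parity of $A_2$ alternates at every step. This gives
\[
 A_2(16k)\equiv A_2(0),\quad A_2(16k+8)\equiv A_2(0)+1,\quad
 A_2(16k+4)\equiv A_2(4),\quad A_2(16k+12)\equiv A_2(4)+1 \pmod 2,
\]
by an immediate induction on $k$.

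It remains to compute the two base values.

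For $A_2(0)$: the only partition of $0$ is empty. It has $r=s=0$, hence pairing rank $0\neq-2$, so $A_2(0)=0$. Equivalently, the series \eqref{eq:Ad-series} with $d=2$ begins at $q^2$.

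For $A_2(4)$: we count simply paired partitions of $4$ with $r-s=-2$.
\begin{itemize}[leftmargin=*]
\item If $A=\varnothing$, we need a distinct partition of $4$ with alternating sum $2$. The only one is $(3,1)$.
\item If $A\neq\varnothing$ with $r=\max A\geq1$, then $s=r+2\geq3$. The total size is at least $2r+s\geq5>4$, so there are no such partitions.
\end{itemize}
Thus $A_2(4)=1$. As a cross-check, one can read off the coefficient of $q^4$ in \eqref{eq:Ad-series}: the term $q^2/(q^2;q^2)_2$ contributes $1$, and the $r\geq1$ terms start at $q^5$.

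Substituting $A_2(0)=0$ and $A_2(4)=1$ gives exactly the four stated congruences. There is no real obstacle here. The only points needing care are the observation that the square-indicator terms vanish because of the residue $3$ modulo $4$, and the small base-case enumeration.
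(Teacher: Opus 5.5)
Your proposal is correct and follows the paper's proof. Both apply Theorem~\ref{thm:square-switch} with $n\equiv0\pmod 4$, observe that the $\chi$-terms vanish so that $A_2(n+8)\equiv A_2(n)+1\pmod 2$, and induct along the two chains from $A_2(0)=0$ and $A_2(4)=1$; the only difference is that you rule out squares via $n+3,n+7\equiv3\pmod4$ where the paper uses $n+3,n+7\not\equiv1\pmod8$, and you give a more explicit verification of the base values.
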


\begin{proof}
If $m\equiv0$ or $4\pmod8$, then neither $m+3$ nor $m+7$ is congruent to $1$ modulo $8$, and hence neither is an odd square.  Theorem~\ref{thm:square-switch} therefore gives
\[
 A_2(m+8)\equiv A_2(m)+1\pmod2.
\]
Finally, $A_2(0)=0$, while $A_2(4)=1$; indeed, $(3,1)$ is the unique simply paired partition of $4$ having pairing rank $-2$.  Induction along the two progressions $m\equiv0\pmod8$ and $m\equiv4\pmod8$ proves all four congruences.
\end{proof}

\section{A Toeplitz determinant for the pairing rank}

For $m\in\mathbb Z$, we let
\[
 R_m(q)=\sum_{\substack{\lambda\\\rho(\lambda)=m}}q^{|\lambda|}
\]
be the generating function for partitions of pairing rank $m$.  The limiting
form of Lemma~\ref{lem:finite}, with the variables that mark $r(\lambda)$ and
$s(\lambda)$ specialized to $z$ and $z^{-1}$, respectively, gives
\begin{equation}\label{eq:rank-bivariate-gf}
 \mathcal R(z;q)
 :=\sum_{m\in\mathbb Z}R_m(q)z^m
 =\frac{1}{(zq^2;q^2)_\infty(z^{-1}q;q^2)_\infty}.
\end{equation}
We first derive a finite identity and then take its coefficientwise limit.  We use standard notation for complete homogeneous and Schur symmetric functions, as in Macdonald~\cite[Chapter~I]{Macdonald1995}.

We put
\[
 x=(q^2,q^4,q^6,\ldots),
 \qquad
 y=(q,q^3,q^5,\ldots).
\]
For an alphabet $x=(x_1,x_2,\ldots)$, we write
\[
 H(x;z)=\prod_{r\geq1}\frac{1}{1-x_rz}
       =\sum_{a\geq0}h_a(x)z^a,
\]
where $h_a(x)$ is the complete homogeneous symmetric function, with the
convention $h_a(x)=0$ for $a<0$.  Equation
\eqref{eq:rank-bivariate-gf} becomes
\begin{equation}\label{eq:rank-H-factorization}
 \mathcal R(z;q)=H(x;z)H(y;z^{-1}).
\end{equation}
Expanding both factors and extracting the coefficient of $z^{i-j}$ gives
\begin{equation}\label{eq:rank-convolution}
 R_{i-j}(q)
 =\sum_{r\in\mathbb Z}h_{r+i}(x)h_{r+j}(y).
\end{equation}
Indeed, in a term $h_a(x)h_b(y)z^{a-b}$, the substitutions
$a=r+i$ and $b=r+j$ give $a-b=i-j$.

\begin{theorem}[Finite determinant identity]\label{thm:finite-rank-determinant}
For every positive integer $k$,
\begin{equation}\label{eq:finite-rank-determinant}
 \det\bigl(R_{i-j}(q)\bigr)_{1\leq i,j\leq k}
 =\sum_{\ell(\lambda)\leq k}
 s_\lambda(q^2,q^4,q^6,\ldots)
 s_\lambda(q,q^3,q^5,\ldots).
\end{equation}
\end{theorem}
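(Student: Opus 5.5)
The plan is to read \eqref{eq:rank-convolution} as a matrix product and apply the Cauchy--Binet formula, then identify the resulting minors with Schur functions via the Jacobi--Trudi identity. Concretely, for $1\le i\le k$ and $r\in\mathbb Z$ I will put $X_{i,r}=h_{r+i}(x)$ and $Y_{r,j}=h_{r+j}(y)$. Then $R_{i-j}(q)=\sum_r X_{i,r}Y_{r,j}$. Since $h_a=0$ for $a<0$, only indices $r\ge 1-k$ contribute. The sum over $r$ is infinite, but it converges coefficientwise in $q$ because $h_a(x)$ and $h_a(y)$ have $q$-order at least $a$. So Cauchy--Binet applies to a $k\times\infty$ times $\infty\times k$ product: either by truncating $r\le L$, applying the finite Cauchy--Binet formula, and letting $L\to\infty$ coefficientwise, or by citing it directly for formal power series. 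This gives
\[
 \det\bigl(R_{i-j}(q)\bigr)=\sum_{r_1<\cdots<r_k}\det\bigl(h_{r_c+i}(x)\bigr)_{i,c}\,\det\bigl(h_{r_c+j}(y)\bigr)_{c,j}.
\]

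Next I will reindex each $k$-subset $r_1<\cdots<r_k$ (with $r_1\ge 1-k$) by a partition of length at most $k$. Setting $r_c=\lambda_{k+1-c}-k+c$ with $\lambda_1\ge\cdots\ge\lambda_k\ge0$ gives a bijection onto the subsets whose minors are not forced to vanish. Subsets with $r_1<1-k$ give a zero column: $r_1+i\le 0$ for all $i\le k$, so every entry is $h$ of a nonpositive index, and the only possible nonzero one is $h_0=1$, which occurs only when $r_1=1-k$ and $i=k$. I will check this boundary case carefully. After reversing the column order and the row order, with the reversal signs cancelling in each minor, the minor becomes $\det\bigl(h_{\lambda_a-a+b}(x)\bigr)_{a,b}$. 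By Jacobi--Trudi this equals $s_\lambda(x)$, and similarly the other minor equals $s_\lambda(y)$. Both minors undergo the same permutation, so even if a sign appeared it would be squared away, and no sign bookkeeping is needed beyond that.

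The main obstacle I expect is this index bookkeeping: confirming that the shift $r_c=\lambda_{k+1-c}-k+c$ yields exactly the Jacobi--Trudi pattern $h_{\lambda_a-a+b}$, not its transpose, and that the lower cutoff $r\ge 1-k$ matches $\lambda_k\ge0$. I will first verify this on $k=1$, where the determinant is $R_0=\sum_{r\ge0}h_r(x)h_r(y)=\sum_n s_{(n)}(x)s_{(n)}(y)$, and then on $k=2$. Finally, substituting $x=(q^2,q^4,\ldots)$ and $y=(q,q^3,\ldots)$ gives \eqref{eq:finite-rank-determinant}.
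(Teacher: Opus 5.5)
Your approach is the paper's: read \eqref{eq:rank-convolution} as a $k\times\infty$ times $\infty\times k$ product, apply Cauchy--Binet after truncating in $r$, reindex the chosen $k$-sets by partitions, and identify both minors by Jacobi--Trudi. However, the index bookkeeping you flagged as the main risk is off by one in two places, so the identification step fails as written.

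\textbf{The cutoff.} The correct range is $r\ge -k$, not $r\ge 1-k$. The $(k,k)$ entry $R_0=\sum_r h_{r+k}(x)h_{r+k}(y)$ receives $h_0(x)h_0(y)=1$ from $r=-k$. Your own boundary analysis shows the same thing once corrected. A subset with $r_1=-k$ has $x$-column $(h_{1-k},\ldots,h_{-1},h_0)^{T}=(0,\ldots,0,1)^{T}$, which is not zero. The lone $h_0$ occurs at $r_1=-k$, not at $r_1=1-k$. Only $r_1\le -k-1$ gives a zero column.

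\textbf{The shift.} Your $r_c=\lambda_{k+1-c}-k+c$ equals $\lambda_a-a+1$ with $a=k+1-c$. So the $x$-minor has entries $h_{\lambda_a-a+i+1}(x)$, and its determinant is a Jacobi--Trudi determinant for $(\lambda_1+1,\ldots,\lambda_k+1)$, not for $\lambda$.

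Taken literally, your argument therefore has two defects:
\begin{enumerate}[label=(\roman*)]
\item it discards every term with $\ell(\lambda)<k$, including the constant $1$ from $\lambda=\varnothing$;
\item it mislabels the remaining minors.
\end{enumerate}
The $k=1$ test you planned would expose this: your parametrization gives $\sum_{n\ge0}h_{n+1}(x)h_{n+1}(y)=R_0-1$.

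\textbf{The fix} is the paper's indexing: $r_c=\lambda_{k+1-c}-(k+1-c)$, or equivalently, with the indices listed decreasingly, $r_j=\lambda_j-j$. Then $r\ge -k$ corresponds exactly to $\lambda_k\ge 0$.
\begin{itemize}
\item Reverse only the columns of the $x$-minor. This gives $\det\bigl(h_{\lambda_a-a+i}(x)\bigr)_{i,a}$, the transpose of the Jacobi--Trudi matrix.
\item Reverse only the rows of the $y$-minor. This gives $\det\bigl(h_{\lambda_a-a+j}(y)\bigr)_{a,j}$.
\end{itemize}
The two signs $(-1)^{\binom k2}$ cancel in the product. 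Reversing both rows and columns of a single minor, as your wording suggests, produces entries $h_{\lambda_a-a+k+1-b}$, which is not the Jacobi--Trudi pattern. You are right, though, that the product of the two minors does not depend on how the index set is ordered. With these corrections your argument coincides with the paper's proof.
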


\begin{proof}
Equation~\eqref{eq:rank-convolution} expresses the matrix on the left of
\eqref{eq:finite-rank-determinant} as the product of the two infinite matrices
\[
 \bigl(h_{r+i}(x)\bigr)_{\substack{1\leq i\leq k\\r\in\mathbb Z}}
 \quad\text{and}\quad
 \bigl(h_{r+j}(y)\bigr)_{\substack{r\in\mathbb Z\\1\leq j\leq k}}.
\]
Applying Cauchy--Binet coefficientwise therefore gives
\begin{align}
 \det\bigl(R_{i-j}(q)\bigr)_{1\leq i,j\leq k}
 ={}&\sum_{r_1>r_2>\cdots>r_k}
 \det\bigl(h_{r_j+i}(x)\bigr)_{1\leq i,j\leq k}
 \det\bigl(h_{r_i+j}(y)\bigr)_{1\leq i,j\leq k}.
 \label{eq:rank-cauchy-binet}
\end{align}
We first restrict the
indices to $-k\leq r\leq L$ and apply the ordinary finite Cauchy--Binet
formula.  Indices $r<-k$ give zero, while the $q$-order of the terms tends to
infinity with $r$.  Hence every coefficient stabilizes as $L\to\infty$.
Writing the selected indices in decreasing rather than increasing order
reverses the columns of the first determinant and the rows of the second, so
the two signs cancel.

We set
\[
 \lambda_j=r_j+j,
 \qquad 1\leq j\leq k.
\]
Because $r_j>r_{j+1}$, we have
\[
 \lambda_j-\lambda_{j+1}=r_j-r_{j+1}-1\geq0.
\]
If $\lambda_k<0$, then the $k$th column of the first determinant in
\eqref{eq:rank-cauchy-binet} is zero.  Thus the nonzero terms are indexed
precisely by partitions $\lambda=(\lambda_1,\ldots,\lambda_k)$ with at most
$k$ parts; conversely, such a partition determines the indices uniquely by
$r_j=\lambda_j-j$.

The Jacobi--Trudi identity now gives
\begin{align*}
 \det\bigl(h_{r_j+i}(x)\bigr)_{1\leq i,j\leq k}
 &=\det\bigl(h_{\lambda_j-j+i}(x)\bigr)_{1\leq i,j\leq k}
 =s_\lambda(x),\\
 \det\bigl(h_{r_i+j}(y)\bigr)_{1\leq i,j\leq k}
 &=\det\bigl(h_{\lambda_i-i+j}(y)\bigr)_{1\leq i,j\leq k}
 =s_\lambda(y).
\end{align*}
The first determinant is the transpose of the usual
Jacobi--Trudi determinant.  Substitution in
\eqref{eq:rank-cauchy-binet} proves \eqref{eq:finite-rank-determinant}.
\end{proof}

\begin{theorem}[Limiting determinant identity]\label{thm:limiting-rank-determinant}
Coefficientwise as a formal power series in $q$,
\begin{equation}\label{eq:limiting-rank-determinant}
 \lim_{k\to\infty}
 \det\bigl(R_{i-j}(q)\bigr)_{1\leq i,j\leq k}
 =\prod_{r\geq1}\frac{1}{(1-q^{2r+1})^r}.
\end{equation}
The identity also holds analytically for $|q|<1$.
\end{theorem}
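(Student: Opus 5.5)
We will start from Theorem~\ref{thm:finite-rank-determinant} and let $k\to\infty$. The finite determinant equals
\[
 \sum_{\ell(\lambda)\le k}s_\lambda(x)s_\lambda(y),
 \qquad x=(q^2,q^4,\ldots),\quad y=(q,q^3,\ldots).
\]
The Cauchy identity then gives the unrestricted sum:
\[
 \sum_{\lambda}s_\lambda(x)s_\lambda(y)=\prod_{i,j\ge1}\frac{1}{1-x_iy_j}.
\]
The plan has two steps: justify the coefficientwise limit, and evaluate the double product.

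\textbf{Limit.} Every monomial in $s_\lambda(x)$ has $q$-degree at least $2|\lambda|$, because each variable of $x$ is a power $q^{2m}$ with $m\ge1$. Likewise every monomial in $s_\lambda(y)$ has degree at least $|\lambda|$. Hence $s_\lambda(x)s_\lambda(y)=O(q^{3|\lambda|})$. A partition with $\ell(\lambda)>k$ has $|\lambda|>k$, so the omitted terms are $O(q^{3(k+1)})$. Therefore each fixed coefficient of the finite determinant agrees with that of the full Cauchy sum once $k$ is large. This is the only delicate point: we must check that the Cauchy identity is legitimate for these infinite specialized alphabets. We will argue this via the degree bound as well. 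Truncate both alphabets to finitely many variables; the discarded variables have large $q$-degree, so the coefficients stabilize. This parallels the stabilization argument already used in the proof of Theorem~\ref{thm:finite-rank-determinant}.

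\textbf{Product.} Substituting the alphabets gives
\[
 \prod_{i,j\ge1}\frac{1}{1-q^{2i+2j-1}}.
\]
We then count the pairs $(i,j)$ with $i,j\ge1$ and $2i+2j-1=2r+1$, that is, $i+j=r+1$. There are exactly $r$ such pairs for each $r\ge1$. Collecting factors yields $\prod_{r\ge1}(1-q^{2r+1})^{-r}$, which proves \eqref{eq:limiting-rank-determinant} as formal power series.

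\textbf{Analytic statement.} For $|q|<1$ we will use absolute convergence. Set $u=|q|$. The Cauchy sum with $q$ replaced by $u$ is a convergent series of nonnegative terms, because $\sum_r r\,u^{2r+1}<\infty$. This series dominates $\sum|s_\lambda(x)s_\lambda(y)|$, since Schur functions have nonnegative coefficients. The tail over $\ell(\lambda)>k$ therefore tends to $0$. Each $R_m(q)$ converges for $|q|<1$, so the finite identity holds analytically as well, and the limit follows.
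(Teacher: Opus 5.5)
Your proposal is correct and follows essentially the same route as the paper. You pass from the finite Schur sum to the full Cauchy sum via the degree bound $3|\lambda|$, then count the $r$ pairs with $i+j=r+1$, and finally handle the analytic statement by domination with the $|q|$-specialization. Your extra justification of the Cauchy identity under the infinite specialization, via truncated alphabets, is a detail the paper leaves implicit.
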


\begin{proof}
Every monomial occurring in
$s_\lambda(q^2,q^4,q^6,\ldots)$ has $q$-degree at least $2|\lambda|$, while
every monomial occurring in $s_\lambda(q,q^3,q^5,\ldots)$ has $q$-degree at
least $|\lambda|$.  Their product therefore has $q$-degree at least
$3|\lambda|$.  For a fixed power $q^N$, only partitions with
$|\lambda|\leq N/3$ can contribute.  It follows from
Theorem~\ref{thm:finite-rank-determinant} that the coefficientwise limit exists
and equals
\[
 \sum_\lambda s_\lambda(x)s_\lambda(y).
\]
By the Cauchy identity for Schur functions,
\[
 \sum_\lambda s_\lambda(x)s_\lambda(y)
 =\prod_{i,j\geq1}\frac{1}{1-x_iy_j}
 =\prod_{i,j\geq1}\frac{1}{1-q^{2i+2j-1}}.
\]
For a fixed $r\geq1$, there are exactly $r$ positive integer pairs $(i,j)$
satisfying $i+j-1=r$.  Collecting the corresponding factors gives
\[
 \prod_{i,j\geq1}\frac{1}{1-q^{2i+2j-1}}
 =\prod_{r\geq1}\frac{1}{(1-q^{2r+1})^r},
\]
which proves \eqref{eq:limiting-rank-determinant} formally.

For $|q|<1$, the Schur-function sum converges absolutely, since its absolute
value is bounded termwise by the same sum with $q$ replaced by $|q|$, whose
Cauchy product is
\[
 \prod_{i,j\geq1}\frac{1}{1-|q|^{2i+2j-1}}<\infty.
\]
The finite sums in \eqref{eq:finite-rank-determinant} therefore converge to the
same product analytically.
\end{proof}

\begin{remark}
MacMahon's classical generating function for plane partitions is
\[
 \prod_{r\geq1}\frac{1}{(1-q^r)^r}
 \qquad\text{\cite{MacMahon1916}}.
\]
Thus the product in \eqref{eq:limiting-rank-determinant} retains the characteristic linear multiplicity $r$ of MacMahon's product, but places the factor of multiplicity $r$ at the odd exponent $2r+1$.  This is the precise sense in which the limiting Toeplitz determinant is related to the plane-partition product.
\end{remark}

\end{document}